\documentclass[11pt,a4paper]{article}

\usepackage[left=3cm, top=2.5cm,bottom=2.5cm,right=3cm]{geometry}
\usepackage{mathtools,amssymb,amsthm,mathrsfs,calc,graphicx,xcolor,cleveref,dsfont,tikz,pgfplots,bm, bbm}
\usepackage{tikz-cd}

\usepackage[british]{babel}
\usepackage{amsfonts}              
\usepackage[T1]{fontenc}
\numberwithin{equation}{section}
\numberwithin{figure}{section}

\newcommand{\ind}{\mathbbm{1}}



\newtheorem {theorem}{Theorem}[section]
\newtheorem {proposition}[theorem]{Proposition}
\newtheorem {lemma}[theorem]{Lemma}
\newtheorem {corollary}[theorem]{Corollary}

{\theoremstyle{definition}

}

{
\theoremstyle{remark}
\newtheorem {remark}[theorem]{Remark}
}

\newcommand{\eqdistr}{\stackrel{d}{=}}

\def\lan{\langle}
\def\ran{\rangle}


\def\EE{\mathbb{E}}

\def\NN{\mathbb{N}}

\def\PP{\mathbb{P}}

\def\RR{\mathbb{R}}
\def\SS{\mathbb{S}}





\def\cC{\mathcal{C}}

\def\cH{\mathcal{H}}

\def\cK{\mathcal{K}}

\def\cP{\mathcal{P}}


\def\dint{\textup{d}}
\def\Tan{\textup{Tan}}

\def\pos{\textup{pos}}
\def\PC{\textup{PC}}
\def\vol{\textup{vol}}
\def\cones{\textup{Cns}}

\setlength{\parindent}{0pt}

\begin{document}

\title{\bfseries A new approach to weak convergence\\ of random cones and polytopes}

\author{Zakhar Kabluchko\footnotemark[1],\; Daniel Temesvari\footnotemark[2]\;\; and Christoph Th\"ale\footnotemark[3]}

\date{}
\renewcommand{\thefootnote}{\fnsymbol{footnote}}
\footnotetext[1]{Institut f\"ur Mathematische Stochastik, Westf\"alische Wilhelms-Universit\"at M\"unster, Germany. Email: zakhar.kabluchko@uni-muenster.de}

\footnotetext[2]{
Institut f\"ur Diskrete Mathematik und Geometrie,
Technische Universit\"at Wien,
Austria.
Email: daniel.temesvari@tuwien.ac.at}

\footnotetext[3]{Fakult\"at f\"ur Mathematik, Ruhr-Universit\"at Bochum, Germany. Email: christoph.thaele@rub.de}

\maketitle

\begin{abstract}
\noindent  A new approach to prove weak convergence of random polytopes on the space of compact convex sets is presented. This is used to show that the profile of the rescaled  Schl\"afli random cone of a random conical tessellation generated by $n$ independent and uniformly distributed random linear hyperplanes in $\mathbb{R}^{d+1}$ weakly converges to the typical cell of a stationary and isotropic Poisson hyperplane tessellation in $\mathbb{R}^d$, as $n\to\infty$.
\bigskip
\\
{\bf Keywords}. {Cover-Efron cone, random cone, random polytope, random tessellation, Scheff\'e's lemma, Schl\"afli cone, stochastic geometry, typical cell, weak convergence}\\
{\bf MSC}. Primary  52A22, 60D05; Secondary 52A55, 52B11, 60F05.
\end{abstract}

\section{Introduction and description of the main result}

\subsection{Introduction}
Random polytopes are one of the central objects studied in convex and stochastic geometry. In the most common model, a random polytope arises as a convex hull of a number of independent random points chosen, for example, with respect to the standard Gaussian distribution or with respect to the uniform distribution in a prescribed convex body. Starting with the seminal work of R\'enyi and Sulanke~\cite{renyi_sulanke1,renyi_sulanke2}, exact and asymptotic descriptions have been found for many characteristics of random polytopes. Examples include the expected intrinsic volumes or the expected face numbers. For these functionals also variance asymptotics and accompanying central limit theorems are available; see \cite{Baranyth,Baranyvu,ReitznerCLT,TTW}. We refer the reader to the survey articles~\cite{BaranySurvey,HugSurvey,ReitznerSurvey} and the book of Schneider and Weil~\cite{SW} for more background material, information and references.

Much less is known about random polytopes in other spaces of constant curvature. In this paper, we shall be interested in random spherical polytopes in the $d$-dimensional unit sphere $\SS^d$. These are defined as intersections of $\SS^d$ with random polyhedral cones in $\RR^{d+1}$ which, in turn, are intersections of finitely many closed random half-spaces whose boundaries pass through the origin. In other words, a polyhedral cone is just a set of solutions to a finite system of linear homogeneous inequalities. The study of random polyhedral cones has been initiated by Cover and Efron~\cite{CoverEfron} and continued by Hug and Schneider~\cite{HugSchneiderConicalTessellations}  {and Schneider \cite{SchneiderKinematicCones}}. Random spherical tessellations, i.e.\ random decompositions of the sphere into finitely many spherical polytopes with disjoint interiors, were studied in the works of Miles~\cite{MilesSphere} and Arbeiter and Z\"ahle~\cite{ArbeiterZaehle}; see also~\cite{HugThaele} and~\cite{KabluchkoThaele_VoronoiSphere} for more recent contributions.
The recent renewed interest in spherical convex geometry is due to the fact that geometric properties of polyhedral cones have found striking applications in convex optimization and compressed sensing; see~\cite{AmelunxenLotz,AmelunxenBuergisser,AmelunxenLotzMcCoyTropp,GoldsteinNourdinPeccati,McCoyTropp}.
In addition, random polytopes on the sphere show new phenomena, which do not have analogues in Euclidean spaces. A first such phenomenon has recently been described by B\'ar\'any, Hug, Reitzner and Schneider \cite{BaranyHugReitznerSchneider} for random convex hulls in the half-sphere and was more systematically investigated in~\cite{MarynychKabluchkoTemesvariThaele} and~\cite{kabluchko_poisson_zero}.

\subsection{Definition of random cones}

We shall be interested in random polyhedral cones defined as follows.  Consider $n\in\NN$ random  unit vectors $U_1,\ldots,U_n$ sampled uniformly and independently from the $d$-dimensional unit sphere $\SS^d$ in  $\RR^{d+1}$. The orthogonal complements of these vectors, denoted by $H_1:= U_1^\bot, \ldots, H_n := U_n^\bot$, are $n$ random hyperplanes in $\RR^{d+1}$ passing through the origin {, which are distributed according to the rotation-invariant Haar probability measure on the Grassmannian of all $d$-dimensional linear subspaces of $\RR^{d+1}$}. These hyperplanes dissect the space into a finite number of polyhedral cones. By a classical result of Steiner and Schl\"afli~\cite[Lemma~8.2.1]{SW}, the number of these cones is almost surely constant and equals
\begin{equation}\label{eq:C_n_d+1}
C(n,d+1) := 2\sum_{m=0}^{d}{n-1\choose m}.
\end{equation}
The \textbf{Schl\"afli random cone} $S_n$ is a polyhedral cone selected uniformly at random from this collection of cones {; see \cite{HugSchneiderConicalTessellations,SchneiderKinematicCones}}. By definition, each cone has the same probability of $1/C(n,d+1)$ to be selected.
An equivalent way to think of the Schl\"afli cone is as follows. Consider the random cone
$$
D_n:= \pos (U_1,\ldots,U_n) := \{\lambda_1 U_1 + \ldots +\lambda_n U_n: \lambda_1,\ldots,\lambda_n\geq 0\},
$$
where $U_1,\ldots,U_n$ are independent and uniformly distributed random points on $\SS^d$ as before.
The \textbf{polar} (or dual) \textbf{cone} of $D_n$ is defined by
$$
D_n^\circ := \{y\in\RR^{d+1}:\lan x,y\ran\leq 0\ \text{ for all } x\in D_n\},
$$
where $\langle\,\cdot\,,\, \cdot\,\rangle$ denotes the standard Euclidean scalar product. In fact, $D_n^\circ$ can explicitly be given as
\begin{equation}\label{eq:system}
D_n^\circ = \{y \in \RR^{d+1}: \langle U_1,y\rangle \leq 0,\ldots, \langle U_n, y\rangle\leq 0\}.
\end{equation}
Thus, $D_n^\circ$ is just the set of solutions to a system of  $n$ random linear homogeneous inequalities in the $d+1$ unknowns $y = (y_1,\ldots, y_{d+1})$. It can happen that the only solution is the trivial solution $y=0$, which occurs if and only if $D_n = \RR^{d+1}$. According to a theorem of Wendel~\cite{Wendel}, see also~\cite[Theorem 8.2.1]{SW}, the probability of this event equals
$$
\PP[D_n^\circ = \{0\}] = \PP[D_n = \RR^{d+1}] = 1-  {C(n,d+1)\over 2^n}.
$$
It turns out that the conditional distribution of $D_n^\circ$ on the event $\{D_n^\circ\neq \{0\}\}$ coincides with the distribution of the Schl\"afli cone $S_n$; see ~\cite[Theorem~3.1]{HugSchneiderConicalTessellations}. Thus, the Schl\"afli cone is just the set of solutions to a system of random linear inequalities given that there is at least one non-zero solution.

Schl\"afli random cones $S_n$ were introduced and studied by Cover and Efron~\cite{CoverEfron} and Hug and Schneider~\cite{HugSchneiderConicalTessellations} along with their polars $S_n^\circ$, called the \textbf{Cover-Efron random cones}. These authors determined expected values of several natural characteristics of $S_n$ and $S_n^\circ$. For example, Cover and Efron~\cite[Theorems~1' and~3']{CoverEfron} calculated explicitly the expectation of $f_k(S_n\cap \SS^{d})$, the number of $k$-dimensional faces of the spherical polytope $S_n\cap \SS^{d}$:
\begin{align*}
 {\EE f_k(S_n\cap\SS^d) = {2^{d-k}{n\choose d-k}C(n-d+k,k+1)\over C(n,d+1)},\qquad k\in\{0,1,\ldots,d\}.}
\end{align*}
Passing to the large $n$ limit  {and using that $C(n,d+1)$ is asymptotic to ${2\over d!}n^{d}$ and $C(n-d+k,k+1)$ to ${2\over k!}n^{k}$, as $n\to\infty$},  they derived the formula
\begin{equation}\label{eq:cover_efron_cube}
\begin{split}
\lim_{n\to\infty} \EE f_k(S_n\cap \SS^{d}) = 2^{d-k} \binom{d}{k}, \qquad k \in \{0,1,\ldots, d\}.
\end{split}
\end{equation}
The starting point of the present work was the observation, due to Cover and Efron~\cite[Section~4]{CoverEfron}, that the number on the right-hand side coincides with  the number of $k$-dimensional faces of the $d$-dimensional cube. In the last sentence of their paper, Cover and Efron~\cite{CoverEfron} wrote:  ``Loosely speaking, the ``expected'' cross section of $W$ is a cube''. This is certainly true with regard to the number of faces, but does it mean that the \textit{shape} of the spherical polytope $S_n\cap \SS^d$ approaches the shape of the cube in the large $n$ limit? As we shall show, the answer is negative. Our main result states that, in a suitable sense,  $S_n\cap \SS^d$ looks like a typical cell $Z$ of a stationary and isotropic Poisson hyperplane tessellation in $\RR^d$ {, which is another well studied object in stochastic geometry}. At the moment, the reader may think of $Z$ as of a uniformly chosen cell from all cells of the Poisson hyperplane tessellation within a ``large'' observational window;  {Figure \ref{fig2} shows two simulations of Poisson hyperplane tessellation in $\RR^2$}. A rigorous definition will be given in Section~\ref{subsec:weak_conv_main_proof} below.  Coincidentally, $Z$ has the same expected  number of $k$-dimensional faces as the cube in every dimension $k\in \{0,1,\ldots, d\}$:
$$
 {\EE f_k(Z) = 2^{d-k}{d\choose k};}
$$
see~\cite[Theorem~10.3.1]{SW}. This gives an explanation of~\eqref{eq:cover_efron_cube} on the level of a distributional limit theorem.

\subsection{Main result}

\begin{figure}[t]
\begin{center}
	\begin{tikzpicture}
	\pgftext{\includegraphics[width=250pt]{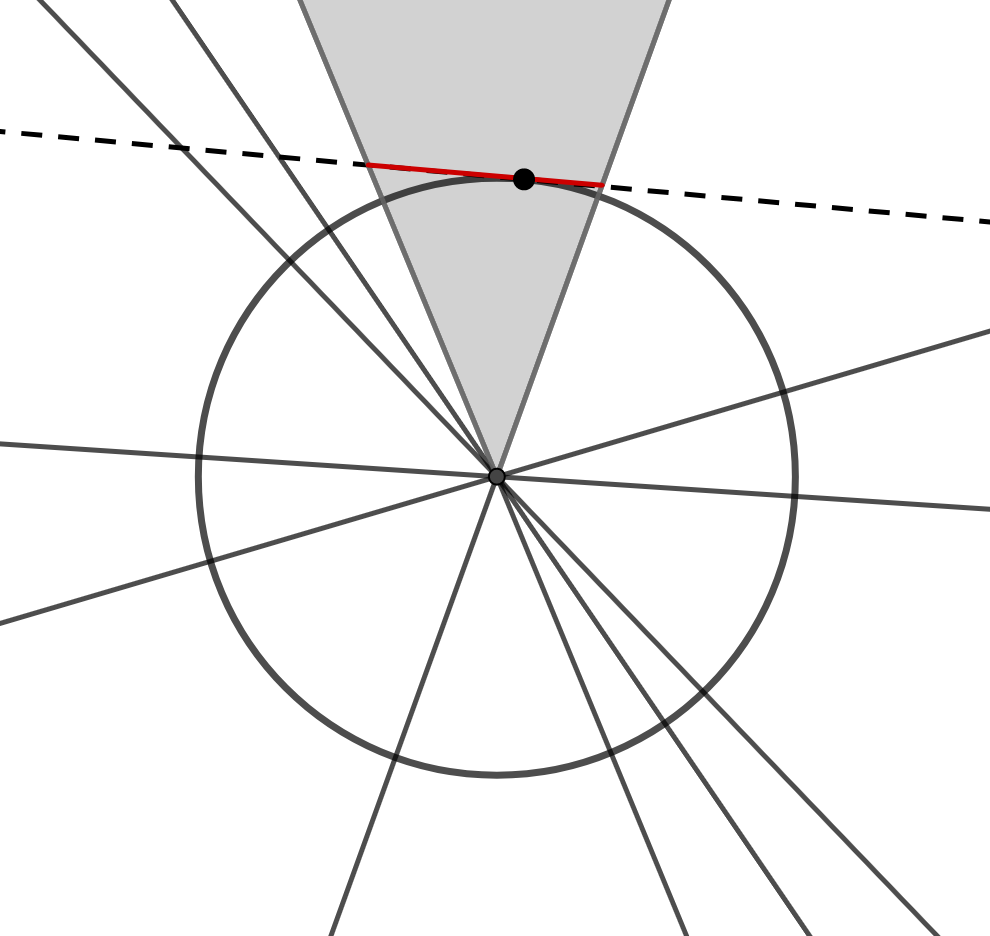}} at (0pt,0pt);
	\node at (2.8,-1.3) {$\SS^d$};
	\node at (0.1,3.5) {$S_n$};
	\node at (0.25,2.15) {$u(S_n)$};
	\node at (3,1.9) {$\Tan_{u(S_n)}$};
	\end{tikzpicture}
\end{center}
\caption{A conical random tessellation with the Schl\"afli random cone $S_n$ and its profile (in red) in the tangent space $\Tan_{u(S_n)}$ at a uniform random point $u(S_n)$.}
\label{fig1}
\end{figure}

Our main result can roughly be described as follows (all details will be provided in the remaining parts of this paper). Let $S_n$ be a Schl\"afli random cone  {as defined in the previous section}. In the large $n$ limit, the cone $S_n$ becomes ``thin'' (close to a ray) and therefore needs to be rescaled (enlarged) to have a non-trivial weak limit. The rescaling is done as follows. Take a random point $u(S_n)$  distributed uniformly in the spherical polytope $S_n\cap \SS^d$ and let $\Tan_{u(S_n)}$ be the tangent space of $\SS^d$ at $u(S_n)$. The intersection $S_n\cap\Tan_{u(S_n)}$ of the Schl\"afli cone with this tangent space is what we call the \textbf{profile} of $S_n$, see Figure \ref{fig1}. After identifying $\Tan_{u(S_n)}$ with $\RR^d$ using some isometry that maps $u(S_n)$ to the origin of $\RR^d$, the profile can be regarded as a random (possibly unbounded) convex set in $\RR^d$.

\begin{theorem}\label{thm:Intro}
As $n\to\infty$, the profile of the  Schl\"afli random cone $S_n$, multiplied by the factor $n$,  weakly converges to the typical cell $Z$ of a stationary and isotropic Poisson hyperplane tessellation in $\RR^d$ with intensity
\begin{equation}\label{eq:gamma}
\gamma:={1\over\sqrt{\pi}}{\Gamma({d+1\over 2})\over\Gamma({d\over 2})}.
\end{equation}
The typical cell $Z$ is centered at a random point distributed uniformly inside the cell. The convergence takes place on the space $\cK^d$ of compact convex subsets of $\RR^d$ to which the profile belongs with probability converging to $1$.
\end{theorem}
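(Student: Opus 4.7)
The strategy is to combine rotational symmetry with the Scheff\'e-type density argument announced in the abstract. By rotation invariance of the joint distribution of $(u(S_n),S_n)$, one may condition on $u(S_n)=e_{d+1}$ and identify $\Tan_{e_{d+1}}$ with $\RR^d$ via the map $(y,1)\mapsto y$. Decomposing $U_i=(\sqrt{1-Z_i^2}\,\hat U_i,Z_i)$ with $\hat U_i\in\SS^{d-1}$ and $Z_i\in[-1,1]$, the intersection $H_i\cap \Tan_{e_{d+1}}$ becomes the affine hyperplane $\{y\in\RR^d:\langle y,\hat U_i\rangle=-Z_i/\sqrt{1-Z_i^2}\}$. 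A useful preliminary observation is that the law of $(u(S_n),S_n)$ differs from the simpler law of $(V,S(V))$, where $V$ is uniform on $\SS^d$ and $S(V)$ is the cone of the tessellation containing $V$, by the explicit Radon--Nikodym factor $\mathrm{area}(\SS^d)/(C(n,d+1)\,\mathrm{area}(S(V)\cap\SS^d))$; equivalently, the cone marginal of $S_n$ is area-unbiased, in contrast to the area-biased marginal of $S(V)$.

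Next I would establish Poisson convergence for the rescaled facet data. The joint density of $(\hat U_i,Z_i)$ on $\SS^{d-1}\times[-1,1]$ is $\gamma(1-z^2)^{(d-2)/2}$ (against the uniform measure on $\SS^{d-1}$), with $\gamma=\Gamma(\tfrac{d+1}{2})/(\sqrt{\pi}\,\Gamma(\tfrac{d}{2}))$, so the density of $Z_i$ at $0$ equals $\gamma$. Only the hyperplanes with $|Z_i|=O(1/n)$ affect the scaled profile $nP_n$, and a standard binomial-to-Poisson computation shows that the point process of rescaled pairs $(\hat U_i,\,-nZ_i/\sqrt{1-Z_i^2})$ converges in distribution to a Poisson point process on $\SS^{d-1}\times\RR$ with intensity $\gamma\,\sigma_1(\mathrm{d}e)\,\mathrm{d}t$, where $\sigma_1$ is the uniform probability measure on $\SS^{d-1}$. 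This is exactly the intensity measure of a stationary isotropic Poisson hyperplane process in $\RR^d$ of intensity $\gamma$.

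It remains to identify the limit law and to upgrade the Poisson convergence to weak convergence of sets in $\cK^d$. Under the coupling with $(V,S(V))$, the rescaled profile tends to the zero cell $Z_0$ of the limiting Poisson hyperplane tessellation, because $V$ (the base point of the tangent space) corresponds in the limit to the origin of $\RR^d$. The area-unbiasing factor passes, in the scaled limit, to a $1/\vol(Z_0)$-reweighting of the zero-cell distribution. By the standard Palm-theoretic identity $\mathrm{d}\PP(Z_0)\propto\vol(Z)\,\mathrm{d}\PP(Z)$ for a stationary tessellation, this reweighting precisely converts $Z_0$ into the typical cell $Z$ centred at a uniform interior point, which is the limit claimed.

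The main obstacle is the final upgrade from Poisson convergence of the underlying hyperplane process to weak convergence in $\cK^d$. Here I would follow the Scheff\'e approach of the paper: fix a combinatorial type, parametrise the corresponding stratum of polytopes by its finitely many facet normals and support values, write the joint density of these parameters for $nP_n$ in closed form from the explicit spherical density of the $U_i$'s, and show pointwise convergence to the corresponding density of $Z$ (restricted to the same combinatorial type). Scheff\'e's lemma then yields $L^1$-convergence of densities, hence total variation convergence and in particular weak convergence in $\cK^d$. The delicate points are to organise the densities across all combinatorial types simultaneously and to control the vanishing probability that $nP_n$ fails to be a bounded polytope, which is precisely where the qualification ``the profile belongs to $\cK^d$ with probability converging to $1$'' enters.
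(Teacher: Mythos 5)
Your plan is correct in outline and shares the paper's three-step skeleton: (i) the size-biasing relation between the Schl\"afli cone and the cone of the tessellation containing a fixed reference point (your Radon--Nikodym factor $\mathrm{area}(\SS^d)/(C(n,d+1)\,\mathrm{area}(S(V)\cap\SS^d))$ is exactly Lemma~\ref{lemma:size_biased} and \eqref{eq:radon_nik}); (ii) convergence of the rescaled profile of that cone to the Poisson zero cell $Z_0$; (iii) the $1/\vol$-debiasing via $\dint\mu_{Z_0}/\dint\mu_Z=\vol/\EE[\vol(Z)]$ to pass from $Z_0$ to the typical cell $Z$. Where you genuinely diverge is in step (ii): you work directly with the hyperplanes $H_i\cap\Tan_{u(S_n)}$, compute that the rescaled normal--distance pairs form a process converging to a Poisson hyperplane process of intensity $\gamma$, and propose a Scheff\'e argument in a facet parametrisation (normals and support values). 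The paper instead dualises, represents the profile of $S_n^{-e}$ as the polar of a beta$'$ (Cauchy) polytope, proves vertex-density convergence to $\mathrm{conv}(\Pi)$ (Proposition~\ref{prop:WeakConvRn}), and transfers to the polar via the Jacobian argument of Remark~\ref{rem:density}. Your route buys a more transparent identification of the limiting intensity $\gamma$ and avoids the polarity Jacobian entirely; what it costs is that the Scheff\'e machinery of Section~\ref{sec:weak_conv} is built for the vertex coordinatisation $\iota:\widetilde\cP_\infty^d\to\cP_\infty^d$, so you would have to redevelop the analogues of Propositions~\ref{prop:WeakConvergenceAbstract} and~\ref{prop:WeakCovAbstract} for the facet coordinatisation (continuity of the map from facet data to the polytope in $d_H$, a reference measure on the facet strata, and the exact pre-limit density, which must include the probability that none of the remaining $n-m$ hyperplanes cuts the candidate cell). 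Two further points you correctly flag but should make sure not to shortcut: the localisation step (showing that only hyperplanes with $|Z_i|=O(1/n)$ matter is exactly the content of Lemma~\ref{lem:compact_with_probab1} and needs a separate argument), and the fact that the unbounded reweighting factor $1/\vol$ must be applied at the level of pre-limit densities and passed to the limit pointwise --- weak convergence to $Z_0$ alone would not suffice, which is precisely why the paper needs the density-level statement of Remark~\ref{rem:density} rather than just Corollary~\ref{cor:RnPolar}.
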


On the intuitive level, Theorem~\ref{thm:Intro} can be understood as follows.  As the number $n$ of hyperplanes  grows, the cells of the spherical tessellation generated by these hyperplanes become small. Since on small scales the sphere looks approximately like a flat $d$-dimensional Euclidean space, it is not surprising that a spherical cell chosen uniformly at random from the collection of all available spherical cells looks approximately like a similarly defined object in the Euclidean setting.
However, considerable difficulties arise when trying to make this idea rigorous. The main obstacle is the lack of a convenient representation for the distribution of the Schl\"afli cone. Its definition is not well-suited to study weak convergence and the only explicit representation we are aware of will be given in Lemma~\ref{lemma:size_biased}. It states that the Schl\"afli cone can be  obtained from another natural random cone after biasing it by the inverse of the solid angle. However, since the inverse angle is not bounded, standard weak convergence theory does not directly apply to this representation.  In order to prove Theorem \ref{thm:Intro} we develop a new general method to prove weak convergence of random polytopes in $\RR^d$,  {which is of interest in its own right}.

The idea behind this new method is to assign to each polytope with $m \geq d+1$ vertices in $\RR^d$ its ``coordinate representation'' obtained by listing the coordinates of the vertices in some fixed order.  The space of the polytopes can then essentially be identified with some subset of the disjoint union of all the spaces $(\RR^{d})^m$, $m\geq d+1$. There is a natural analogue of the Lebegue measure on this subset, and the distributions of many natural random polytopes possess well-defined densities with respect to this measure.  If, for a sequence of random polytopes $P_1,P_2,\ldots$ with respective densities $f_1,f_2,\ldots$, we can prove the pointwise convergence of the densities to the density $f$ of some random polytope $P$,  then by a Scheff\'e-type result we can conclude that the sequence of random polytopes $P_1,P_2,\ldots$ converges weakly to the random polytope $P$ on the space of compact convex bodies equipped with the Hausdorff metric.

\medbreak

The remaining parts of this text are structured as follows. In Section~\ref{sec:WeakConvergence} we present our new approach to weak convergence of random polytopes. In Section~\ref{sec:cones_profiles} we formally introduce the  Schl\"afli random cones (together with several other models of random cones). Finally, in Section~\ref{sec:ConesAndProof} we give a proof of Theorem \ref{thm:Intro}.

\section{Weak convergence of random polytopes}\label{sec:WeakConvergence}\label{sec:weak_conv}

 {The purpose of this section is to describe our new method proving weak convergence of random polytopes. We start by introducing the necessary (topological) spaces of polytopes and and then state the new Scheff\'e-type condition for weak convergence.}

\subsection{Spaces of polytopes}
Fix a dimension $d\geq 1$. By $\cK^d$ we denote the space of non-empty  compact convex subsets in $\RR^d$ and supply $\cK^d$ with the topology $\tau_H^d$ generated by the Hausdorff distance $d_H$. We recall that
$$
d_H(K,L) = \min\{\varepsilon\geq 0:K\subseteq L+\varepsilon B^d,L\subseteq K+\varepsilon B^d\},\qquad K,L\in\cK^d,
$$
where $B^d$ stands for the centred unit ball in $\RR^d$ . The set of polytopes in $\RR^d$ is denoted by $\cP^d\subset\cK^d$.
A random element, defined on some probability space and taking values in $\cP^d$, is called a random polytope.

For all integers $m\geq d+1$ we let $\cP_m^d\subset \cP^d$ be the set of polytopes in $\RR^d$ with exactly $m$ vertices and such that the first coordinates of these vertices are pairwise different. By the latter condition we mean the following. If $p\in\cP^d$ is a polytope with $m\geq d+1$ vertices $x_i=(x_{i,1},\ldots,x_{i,d})$, $i\in\{1,\ldots,m\}$, then $p\in\cP_m^d$ if and only if $x_{1,1}<x_{2,1}<\ldots<x_{m,1}$ (possibly after reordering of the vertices). We supply $\cP_m^d$ with the topology induced from its embedding into $\cK^d$.
Furthermore, for $m\geq d+1$, we define the open sets
\begin{multline*}
\widetilde \cP_m^d =
\{
(x_1,\ldots,x_m)\in(\RR^d)^m:
x=(x_{i,1},\ldots,x_{i,d})\in\RR^d,
\\
x_{1,1}<x_{2,1}<\ldots<x_{m,1},
\;\;\;
x_1,\ldots, x_m \text{ are in convex position}
\}.
\end{multline*}
We supply $\widetilde \cP_m^d$ with the restriction $\widetilde \tau_m^d$ of the standard Euclidean topology in $(\RR^d)^m$. The sets  (but not the topological spaces) $\widetilde \cP_m^d$ and $\cP_m^d$  can be identified via the bijective maps
\begin{align*}
	\iota_m: \widetilde \cP_m^d \to \cP_m^d,\quad(x_1,\ldots,x_m) \mapsto \mathrm{conv}(\{x_1,\ldots,x_m\}).
\end{align*}
We also let
$$
\widetilde \cP_\infty^d:=\coprod\limits_{m=d+1}^\infty \widetilde \cP_m^d
$$
be the disjoint union of the spaces $\widetilde \cP_m^d$, $m\geq d+1$, and supply $\widetilde \cP_\infty^d$ with the disjoint union topology $\widetilde \tau_\infty^d:=\coprod\limits_{m=d+1}^\infty\widetilde \tau_m^d$. We recall that the topology $\widetilde \tau_\infty^d$ is the finest topology on $\widetilde \cP_\infty^d$ such that all canonical injections $\widetilde \cP_m^d\hookrightarrow \widetilde \cP_\infty^d$, $m\geq d+1$, are continuous.

\begin{lemma}
The topological space $(\widetilde \cP_\infty^d,\widetilde \tau_\infty^d)$ is a Polish space.
\end{lemma}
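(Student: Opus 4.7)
The plan is to verify the two defining properties of a Polish space — separability and complete metrizability — by treating each summand $\widetilde \cP_m^d$ individually and then assembling them via the disjoint-union construction.

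First, I would show that each $\widetilde \cP_m^d$ is an open subset of the Polish space $(\RR^d)^m$ in its standard Euclidean topology. The chain of strict inequalities $x_{1,1}<x_{2,1}<\ldots<x_{m,1}$ on the first coordinates is plainly an open condition. Convex position is also open: if $x_1,\ldots,x_m$ are in convex position, then for each $i$ the point $x_i$ can be strictly separated from $\mathrm{conv}\{x_j:j\neq i\}$ by a hyperplane, and this strict separation persists under sufficiently small perturbations of the whole tuple. Since $(\RR^d)^m$ is Polish and every open subset of a Polish space is Polish (open sets are $G_\delta$, so one may invoke Alexandrov's theorem, or alternatively exhibit an explicit complete remetrization of the form $(x,y)\mapsto\|x-y\|+|\rho(x)-\rho(y)|$ with $\rho(x):=1/\mathrm{dist}(x,\partial \widetilde \cP_m^d)$), each $\widetilde \cP_m^d$ is a Polish space.

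Second, I would combine the summands. For each $m\geq d+1$, choose a complete metric $\bar d_m$ on $\widetilde \cP_m^d$ inducing $\widetilde \tau_m^d$ and bounded by $1$ (possible by replacing any complete metric $d_m$ with $d_m/(1+d_m)$), and a countable dense set $Q_m\subset\widetilde \cP_m^d$. Define on $\widetilde \cP_\infty^d$ the metric
\[
D(x,y):=\begin{cases}\bar d_m(x,y)&\text{if }x,y\in\widetilde \cP_m^d\text{ for the same }m,\\ 1&\text{otherwise.}\end{cases}
\]
Each $\widetilde \cP_m^d$ is both open and closed in $\widetilde \cP_\infty^d$ for the disjoint-union topology, so the topology induced by $D$ coincides with $\widetilde \tau_\infty^d$. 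Completeness is immediate: any $D$-Cauchy sequence must eventually lie in a single summand $\widetilde \cP_m^d$, where it is $\bar d_m$-Cauchy and hence convergent. Separability follows from the fact that $\bigcup_{m\geq d+1} Q_m$ is a countable dense subset of $\widetilde \cP_\infty^d$.

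There is no substantive obstacle; the only point that deserves a careful sentence is the openness of the convex-position condition in $(\RR^d)^m$, and once this has been observed the rest is a standard disjoint-union bookkeeping exercise together with the classical fact that open subsets of Polish spaces are again Polish.
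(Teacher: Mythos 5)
Your argument is correct and follows essentially the same route as the paper: each $\widetilde \cP_m^d$ is Polish as an open subset of $(\RR^d)^m$, and a countable disjoint union of Polish spaces is Polish. The paper simply delegates both facts to citations (Propositions 8.1.4 and 8.1.2 in Cohn's \emph{Measure Theory}), whereas you spell out the details, including the openness of the convex-position condition and the explicit complete metric on the disjoint union.
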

\begin{proof}
Each of the spaces $\widetilde \cP_m^d$, $m\geq d+1$, is Polish as an open subset of $(\RR^d)^m$; see, e.g.~\cite[Proposition~8.1.4]{Cohn}. Thus, as a countable disjoint union of Polish spaces, $\widetilde \cP_\infty^d$ is a Polish space too; see \cite[Proposition~8.1.2]{Cohn}.
\end{proof}

Let now $\cP_\infty^d$ be the  union of the disjoint sets $\cP_m^d$, $m\geq d+1$, i.e.,
$$
\cP_\infty^d:=\bigcup_{m=d+1}^\infty \cP_m^d \; \subset \cK^d.
$$
This allows us to introduce the bijective map
\begin{align*}
\iota: \widetilde \cP_\infty^d \to \cP_\infty^d \subset \cK^d,\quad(x_1,\ldots,x_m) \mapsto \mathrm{conv}(\{x_1,\ldots,x_m\}).
\end{align*}
As stated in the last claim of~\cite[Theorem 12.3.5]{SW}, the restriction of this map to $\widetilde \cP_m^d$ for each fixed $m\geq d+1$ is continuous. Since   $\widetilde \cP_\infty^d$ is defined as the disjoint union of these topological spaces, the map $\iota$ is continuous.   The diagram in Figure \ref{fig:diag} illustrates the underlying structural associations of the spaces and maps we have introduced so far.

\begin{figure}[t]\label{fig:diag}
\[
\begin{tikzcd}
\widetilde{\mathcal P}_\infty^d \arrow[r,"\iota"]& \mathcal{P}_\infty^d \arrow[r, hook] & \mathcal{P}^d \arrow[r, hook] & \mathcal{K}^d \\
\widetilde{\mathcal P}_m^d \arrow[u, hook] \arrow[r, "\iota_m"] & \mathcal{P}_m^d \arrow[u, hook]      &                               &
\end{tikzcd}
\]	
\caption{Spaces of polytopes and their structural associations.}
\end{figure}

\subsection{Scheff\'e-type condition for weak convergence of random polytopes}
For a topological space $(X,\tau)$ we denote by $\cC_b(X,\tau)$ the set of functions $h:X\to\RR$ which are bounded and continuous with respect to $\tau$ and the standard Euclidean topology on $\RR$. The restriction of $h$ to a subset $A\subseteq X$ is denoted by $h|_A$. Let us also recall that a sequence of probability measures $\mu_n$, $n\geq 1$, on $(X,\tau)$ weakly converges to another probability measure $\mu$ on $(X,\tau)$ provided that $\int_X h\,\dint\mu_n\to\int_X h\,\dint\mu$, as $n\to\infty$, for all $h\in\cC_b(X,\tau)$.

If $\mu$ is a measure on $\widetilde \cP_\infty^d$, we denote by $\mu^\iota := \mu \circ \iota^{-1}$ its image measure under the continuous map $\iota:\widetilde \cP_\infty^d \to \cP_\infty^d$. Thus, $\mu^\iota $ is a measure on $\cP_\infty^d$ (which is a subset of $\cK^d$).
The next proposition states that in order to prove weak convergence of random polytopes it is enough to check the weak convergence of their ``coordinate representations'' in $\widetilde \cP_\infty^d$, which is usually easier.

\begin{proposition}\label{prop:WeakConvergenceAbstract}
Let $(\mu_n)_{n\geq 1}$ be a sequence of probability measures on $\widetilde \cP_\infty^d$. Suppose that $(\mu_n)_{n\geq 1}$ weakly converges on $(\widetilde \cP_\infty^d,\widetilde \tau_\infty^d)$ to some probability measure $\mu$. Then $(\mu_n^\iota)_{n\geq 1}$ converges weakly to $\mu^\iota$ on $(\cK^d,\tau_H^d)$.
\end{proposition}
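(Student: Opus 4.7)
The plan is to deduce this from the continuous mapping theorem applied to $\iota$, treated as a map into the Polish space $(\cK^d, \tau_H^d)$. The excerpt has already recorded two facts that together establish the required continuity: for each $m \geq d+1$ the restriction $\iota_m : \widetilde \cP_m^d \to \cK^d$ is continuous by \cite[Theorem~12.3.5]{SW}, and the space $(\widetilde \cP_\infty^d, \widetilde \tau_\infty^d)$ is endowed with the disjoint-union topology. By the universal property of the latter, a map out of $\widetilde \cP_\infty^d$ is continuous if and only if its restriction to each summand is continuous; hence $\iota : \widetilde \cP_\infty^d \to \cK^d$ is itself continuous, and no extra work on continuity is required.

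With continuity of $\iota$ in hand, I would conclude by going straight to the definition of weak convergence rather than invoking a black-box continuous mapping statement. Pick an arbitrary $h \in \cC_b(\cK^d, \tau_H^d)$. Then $h \circ \iota$ is bounded and $\widetilde\tau_\infty^d$-continuous on $\widetilde \cP_\infty^d$, so it belongs to $\cC_b(\widetilde \cP_\infty^d, \widetilde \tau_\infty^d)$. Applying the change-of-variables formula for image measures and the assumed weak convergence $\mu_n \to \mu$ on $\widetilde \cP_\infty^d$, one obtains
$$
\int_{\cK^d} h \, \dint \mu_n^\iota \;=\; \int_{\widetilde \cP_\infty^d} (h \circ \iota) \, \dint \mu_n \;\longrightarrow\; \int_{\widetilde \cP_\infty^d} (h \circ \iota) \, \dint \mu \;=\; \int_{\cK^d} h \, \dint \mu^\iota.
$$
Since $h$ was arbitrary, this is exactly the weak convergence $\mu_n^\iota \to \mu^\iota$ on $(\cK^d, \tau_H^d)$.

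I do not anticipate any real obstacle here, because the topology $\widetilde \tau_\infty^d$ was deliberately designed to make $\iota$ continuous; the proposition is essentially a packaging lemma. The only subtlety worth flagging is that the image measures $\mu_n^\iota$ and $\mu^\iota$ are concentrated on the strict subset $\cP_\infty^d \subset \cK^d$, but since weak convergence is tested against functions bounded and continuous on the ambient space $\cK^d$, this concentration causes no difficulty. The genuinely substantive work — verifying the hypothesis $\mu_n \to \mu$ on $\widetilde \cP_\infty^d$ via pointwise convergence of densities and a Scheff\'e-type argument — is kept separate and will enter only in the subsequent sections.
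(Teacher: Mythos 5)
Your argument is correct and matches the paper's: the paper establishes the continuity of $\iota$ exactly as you do (continuity of each $\iota_m$ plus the universal property of the disjoint-union topology) and then simply cites the continuous mapping theorem, which your direct computation with $h\circ\iota$ merely unwinds. No gap; this is the same proof.
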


\begin{proof}
This is just an application of the continuous mapping theorem~\cite[p.~20]{billingsley} to the map $\iota$.
\end{proof}

We are now going to state a result which is our main new device to prove weak convergence of random polytopes and which will be one of the key tools in the proof of Theorem~\ref{thm:Intro}.
In what follows we denote by $\mu_\infty^d$ a measure on $\widetilde \cP_\infty^d$ with the property that for each $m\geq d+1$ the restriction  of $\mu_\infty^d$ to $\widetilde \cP_m^d$ coincides with the Lebesgue measure on $\widetilde \cP_m^d\subset (\RR^d)^m$. One might think of $\mu_\infty^d$ as a Lebesgue measure on the space $\widetilde \cP_\infty^d$.   We introduce the notation $p^\iota:=\iota^{-1}(p)\in \widetilde \cP_\infty^d$ for the preimage of a polytope $p \in \cP_\infty^d$ under the bijective map $\iota$. We may view $p^\iota$ as the ``coordinate representation'' of the polytope $p$.  The next result is a Scheff\'e-type sufficient condition for the weak convergence of random polytopes.

\begin{proposition}\label{prop:WeakCovAbstract}
Let $(T_n)_{n\geq 1}$ be a sequence of random polytopes in $\RR^d$ and $T$ be a random polytope in $\RR^d$ with the property that $\PP[T_n\in\cP_\infty^d]=\PP[T\in \cP_\infty^d]=1$ for all $n\geq 1$. Assume that for each $n\geq 1$ the distribution $\mu_{T^\iota_n}$ of $T^\iota_n$ has a density $\varphi_n={\dint\mu_{T^\iota_n}\over\dint\mu_\infty^d}$ and that the distribution $\mu_{T^\iota}$ of $T^\iota$ has a density $\varphi={\dint\mu_{T^\iota}\over\dint\mu_\infty^d}$ with respect to the measure $\mu_\infty^d$. Suppose that $\varphi_n\to\varphi$ pointwise on $\widetilde \cP_\infty^d$, as $n\to\infty$. Then, $T_n\to T$  weakly, as $n\to\infty$, on $(\cK^d,\tau_H^d)$.
\end{proposition}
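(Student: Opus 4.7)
The proof reduces, after unwinding the notation, to a classical Scheffé-type argument on the Polish space $(\widetilde{\cP}_\infty^d, \widetilde{\tau}_\infty^d)$ combined with Proposition~\ref{prop:WeakConvergenceAbstract}. Here is the plan.

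First I would verify that $\mu_\infty^d$ is a $\sigma$-finite measure on $\widetilde{\cP}_\infty^d$: indeed, each $\widetilde{\cP}_m^d$ is an open subset of $(\RR^d)^m$ and can be written as a countable union of relatively compact sets of finite Lebesgue measure; since $\widetilde{\cP}_\infty^d$ is the countable disjoint union of these spaces with the disjoint union topology, $\sigma$-finiteness of $\mu_\infty^d$ follows. Since $\varphi_n$ and $\varphi$ are probability densities with respect to the common $\sigma$-finite measure $\mu_\infty^d$ and $\varphi_n \to \varphi$ pointwise (hence $\mu_\infty^d$-a.e.), the classical Scheffé lemma yields
\begin{equation*}
\int_{\widetilde{\cP}_\infty^d} |\varphi_n - \varphi|\,\dint\mu_\infty^d \;\longrightarrow\; 0, \qquad n\to\infty.
\end{equation*}
The standard proof (dominating $(\varphi-\varphi_n)^+ \le \varphi$ and invoking dominated convergence together with $\int\varphi_n\,\dint\mu_\infty^d = \int\varphi\,\dint\mu_\infty^d = 1$) carries over verbatim to the $\sigma$-finite setting.

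Next, $L^1$-convergence of the densities is equivalent to convergence of the probability measures $\mu_{T_n^\iota}$ to $\mu_{T^\iota}$ in total variation on $\widetilde{\cP}_\infty^d$. Total variation convergence is stronger than weak convergence, so in particular $(\mu_{T_n^\iota})_{n\ge 1}$ converges weakly to $\mu_{T^\iota}$ on $(\widetilde{\cP}_\infty^d, \widetilde{\tau}_\infty^d)$.

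Finally, I would apply Proposition~\ref{prop:WeakConvergenceAbstract} to the sequence $(\mu_{T_n^\iota})_{n\ge 1}$ and the limit $\mu_{T^\iota}$. This gives weak convergence of the image measures $\mu_{T_n^\iota}^\iota$ to $\mu_{T^\iota}^\iota$ on $(\cK^d, \tau_H^d)$. Since $\PP[T_n \in \cP_\infty^d] = \PP[T \in \cP_\infty^d] = 1$, we have $T_n = \iota(T_n^\iota)$ and $T = \iota(T^\iota)$ almost surely, so $\mu_{T_n^\iota}^\iota = \mu_{T_n}$ and $\mu_{T^\iota}^\iota = \mu_T$, and the conclusion $T_n \Rightarrow T$ on $(\cK^d, \tau_H^d)$ follows.

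None of these steps is technically deep; the only genuine point to check carefully is the applicability of Scheffé's lemma with the piecewise-defined $\sigma$-finite reference measure $\mu_\infty^d$ on the disjoint union $\widetilde{\cP}_\infty^d = \coprod_{m\ge d+1} \widetilde{\cP}_m^d$. The conceptual content of the proposition lies entirely in the preparation done earlier — introducing the coordinate space $\widetilde{\cP}_\infty^d$, equipping it with the disjoint union topology, and checking that $\iota$ is continuous — which makes it possible to convert the analytic statement ``pointwise convergence of densities'' directly into weak convergence of compact convex sets in the Hausdorff metric.
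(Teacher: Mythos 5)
Your proposal is correct and follows essentially the same route as the paper: apply Scheff\'e's lemma to the densities with respect to $\mu_\infty^d$ to get $L^1$-convergence (hence weak convergence on $(\widetilde \cP_\infty^d,\widetilde \tau_\infty^d)$), then push forward via the continuous map $\iota$ using Proposition~\ref{prop:WeakConvergenceAbstract}. The extra care you take with the $\sigma$-finiteness of $\mu_\infty^d$ and the identification $\mu_{T_n^\iota}^\iota=\mu_{T_n}$ is a harmless and slightly more explicit version of what the paper does.
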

\begin{proof}
We prove the weak convergence of $(\mu_{T^\iota_n})_{n\geq 1}$ to $\mu_{T^\iota}$, as $n\to\infty$, on $(\widetilde \cP_\infty^d,\widetilde \tau_\infty^d)$, which in view of Proposition \ref{prop:WeakConvergenceAbstract} yields the claim. Let $h\in\cC_b(\widetilde \cP_\infty^d,\widetilde \tau_\infty^d)$ and observe that
\begin{align*}
\Bigg|\int_{\widetilde \cP_\infty^d} h\,\dint\mu_{T^\iota_n}-\int_{\widetilde \cP_\infty^d} h\,\dint\mu_{T^\iota}\Bigg| &= \Bigg|\int_{\widetilde \cP_\infty^d} h\cdot\varphi_n\,\dint\mu_\infty^d-\int_{\widetilde \cP_\infty^d} h\cdot\varphi\,\dint\mu_\infty^d\Bigg|\\
&\leq \|h\|_\infty\int_{\widetilde \cP_\infty^d} |\varphi_n-\varphi|\,\dint\mu_\infty^d,
\end{align*}
where $\|h\|_\infty=\sup\{|h(p)|:p\in\widetilde\cP_\infty^d\}$. Since $h$ is bounded, $\|h\|_\infty<\infty$. Moreover, by assumption we have that $\varphi_n\to\varphi$ pointwise on $\widetilde \cP_\infty^d$ {, as $n\to\infty$}. By Scheff\'e's lemma (see \cite[Section~3.2.1]{Durrett}) this implies that $\int_{\widetilde \cP_\infty^d} |\varphi_n-\varphi|\,\dint\mu_\infty^d\to 0$, as $n\to\infty$. Thus,
\begin{align*}
\lim_{n\to\infty}\Bigg|\int_{\widetilde \cP_\infty^d} h\,\dint\mu_{T^\iota_n}-\int_{\widetilde \cP_\infty^d} h\,\dint\mu_{T^\iota}\Bigg| &= 0,
\end{align*}
which is the desired weak convergence of $(\mu_{T^\iota_n})_{n\geq 1}$ to $\mu_{T^\iota}$ on $(\widetilde \cP_\infty^d,\widetilde \tau_\infty^d)$.
\end{proof}

\section{Random cones and their profiles}\label{sec:cones_profiles}

In this section we  describe the construction of four types of random cones we are interested in. These include the Schl\"afli cones, their polar versions called the Cover-Efron cones (to borrow the terminology introduced in~\cite{HugSchneiderConicalTessellations}), as well as the half-space versions of these two types.

\subsection{Cover-Efron  and Schl\"afli cones}

\begin{figure}[t]
	\begin{center}
	\begin{tikzpicture}
\pgftext{\includegraphics[width=250pt]{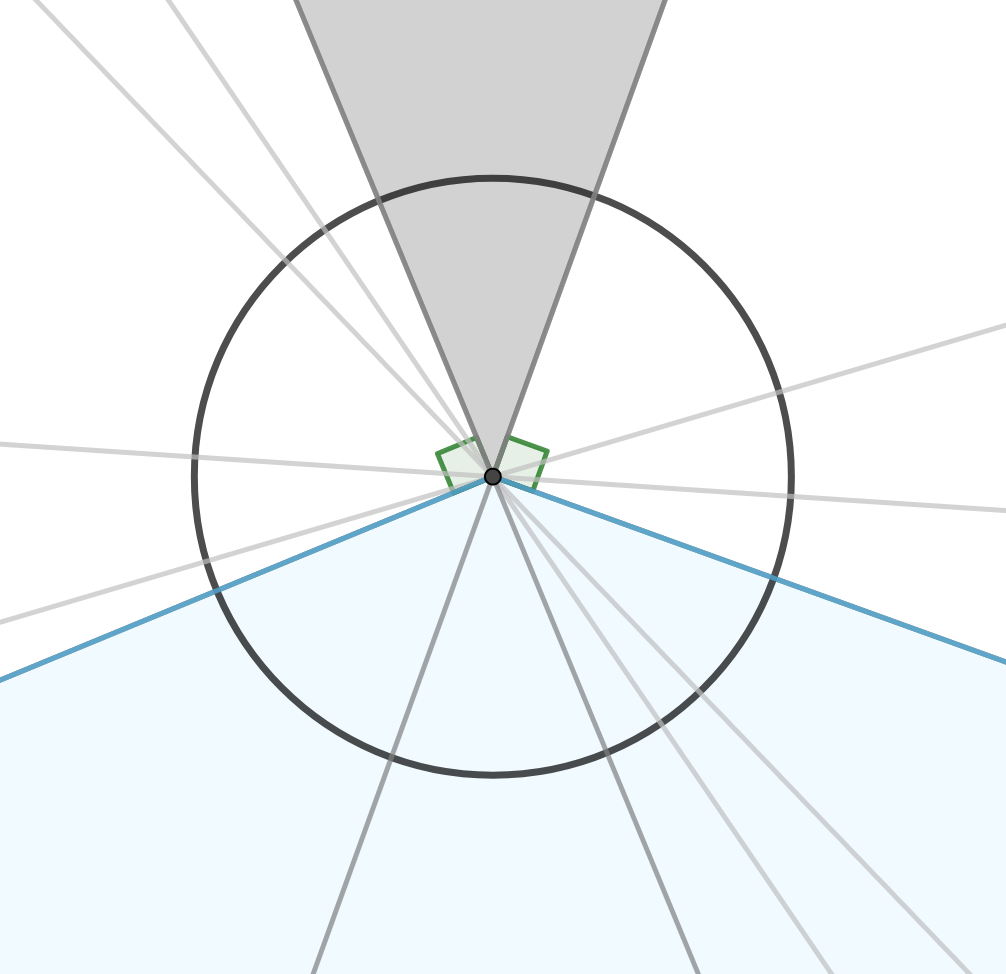}} at (0pt,0pt);
\node at (2.6,1.5) {$\SS^d$};
\node at (0,3.5) {$S_n$};
\node at (0,-3.5) {$S_n^\circ$};
\end{tikzpicture}
	\end{center}
	\caption{A conical random tessellation with the Schl\"afli random cone $S_n$ and its polar $S_n^\circ$, which has the same distribution as the Cover-Efron random cone.}
	\label{fig3}
\end{figure}

To start with, fix an integer  $d\geq 1$ and consider the $d$-dimensional unit sphere $\SS^d$ in $\RR^{d+1}$. The normalized spherical Lebesgue measure on $\SS^d$ is denoted by $\sigma_d$. For $n\in\NN$, let $U_1,\ldots,U_n$ be independent and uniformly distributed random points on $\SS^d$.  We define the random polyhedral cone $D_n$ by
$$
D_n := \pos(U_1,\ldots,U_n).
$$
According to Wendel's theorem, see~\cite{Wendel} or~\cite[Theorem 8.2.1]{SW}, we have that
$$
p_n^{(d)} := \PP[D_n\neq\RR^{d+1}] = {C(n,d+1)\over 2^n},
$$
where we recall the notation
\begin{align*}
C(r,s) := 2\sum_{m=0}^{s-1}{r-1\choose m},\qquad r,s\in\{1,2,\ldots\}.
\end{align*}
Following \cite{HugSchneiderConicalTessellations}, the \textbf{Cover-Efron random cone} is defined as a random polyhedral cone in $\RR^{d+1}$ whose distribution is the conditional distribution of $D_n$ conditioned on the event  $\{D_n\neq\RR^{d+1}\}$. Formally, its distribution is given by $\mu_{D_n\,|\,\{D_n\neq\RR^{d+1}\}}$, where we write $\mu_{(\,\cdot\,)}$ for the law of a random element and use this notation for conditional laws as well.

To  define the Schl\"afli random cones, let $\cH$ denote the space of hyperplanes in $\RR^{d+1}$ passing through the origin. The space $\cH$ carries a  {unique} rotation-invariant probability measure $\nu_d$ defined as the image of $\sigma_d$ under the map $\perp:\SS^d\to\cH,x\mapsto x^\perp$.  Now, for $n\in\NN$, let $H_1,\ldots,H_n$ be independent random hyperplanes with distribution $\nu_d$. With probability $1$, these hyperplanes partition $\RR^{d+1}$ into the constant number $C(n,d+1)$ of random closed polyhedral cones; see \cite[Lemma 8.2.1]{SW}. We denote by $\cones(H_1,\ldots,H_n)$ the collection of these cones. Following \cite{HugSchneiderConicalTessellations,SchneiderKinematicCones}, the  \textbf{Schl\"afli random cone} $S_n$ is a random closed cone picked uniformly at random from $\cones(H_1,\ldots,H_n)$. More precisely,
\begin{equation}\label{eq:schlaefli_def}
\PP[S_n\in\,\cdot\,] = {1\over C(n,d+1)} \int_{\cH^n}\sum_{C\in \cones(H_1,\ldots,H_n)}\ind\{C\in\,\cdot\,\}\,\nu_d^n(\dint(H_1,\ldots,H_n)).
\end{equation}
The  Schl\"afli random cone is related to the Cover-Efron random cone by the concept of conical polarity  {(or duality)}. Namely, let $C^\circ=\{y\in\RR^{d+1}:\lan x,y\ran\leq 0\text{ for all }x\in C\}$ denote the polar (or dual) cone of a cone $C\subset \RR^{d+1}$. Then,  it was shown in \cite[Theorem 3.1]{HugSchneiderConicalTessellations} that $D_n$ and $S_n$ are related by  {the distributional identity}
\begin{align}\label{eq:DualD_nAndS_n}
\mu_{D_n\,|\, \{D_n\neq\RR^{d+1}\}} = \mu_{S_n^\circ}.
\end{align}
That is, the Cover-Efron random cone has the same distribution as the polar of the Schl\"afli random cone (and vice versa because $C^{\circ\circ} = C$) {; see Figure \ref{fig3} for an illustration.}

\subsection{Random cones on the half-sphere}
Next, we are going to define the half-spherical versions of the random cones introduced above.  Fix an arbitrary point $e\in\SS^d$ (for concreteness, the north pole of $\SS^d$) and define the upper half-sphere $\SS_e^d:=\{x\in\SS^d:\lan x,e\ran\geq 0\}$.
Let $X_1,\ldots,X_n$ be independent random points having the uniform distribution on $\SS_e^d$. The polyhedral random  cones
\begin{equation}\label{eq:R_n_def}
R_n:=\pos(X_1,\ldots,X_n)
\end{equation}
were intensively studied in \cite{BaranyHugReitznerSchneider,kabluchko_poisson_zero,MarynychKabluchkoTemesvariThaele}. For example, explicit formulae for the expected number of $k$-dimensional faces and for the expected solid angle of $R_n$ are available; see~\cite{kabluchko_poisson_zero}.
To describe the polar cone of $R_n$, let $H_1,\ldots,H_n$ be the linear hyperplanes orthogonal to $X_1,\ldots,X_n$. Consistent with the notation used above, $H_1,\ldots,H_n$ are independently distributed according to the uniform probability measure $\nu_d$  {on $\cH$}. Let $S_n^{-e}$ be the almost surely uniquely determined cone from $\cones(H_1,\ldots,H_n)$ containing the south pole $-e$.  Then, by definition of the polar cone, we have
\begin{align}\label{eq:DualityRnAndSne}
\mu_{R_n^\circ} = \mu_{S_n^{-e}}.
\end{align}
Note that $R_n$ is contained in the upper half-space, but $S_n^{-e}$ need not be contained in the lower half-space. The dual statement is that $S_n^{-e}$ contains $-e$, but $R_n$ need not contain $e$.

Next we are going to state a relation between $S_n^{-e}$ and the Schl\"afli random cone $S_n$. For a full-dimensional cone $C\subset\RR^{d+1}$ the solid angle $\alpha(C)$ is given by $\alpha(C) := \sigma_d(C\cap \SS^d)$. Given a realization of the Schl\"afli cone $S_n$, denote by $u(S_n)$ a random point sampled uniformly from $S_n\cap \SS^d$. The next lemma states  essentially that if we rotate the cone $S_n$ so that $u(S_n)$ becomes the south pole $-e$, then the resulting cone has the same law as $S_n^{-e}$  up to biasing by the solid angle $\alpha(S_n)$. To make this precise, fix for every point $v\in \SS^d$ some orthogonal transformation $O_v\in \text{SO}(d+1)$ such that $O_v v = -e$ and such that the function $(v_1,v_2)\mapsto O_{v_1} v_2$  is Borel-measurable.

\begin{lemma}\label{lemma:size_biased}
For every $n\in\NN$ and every  non-negative Borel {-measurable} function $f$ on the space $\mathcal K_{\text{con}}$ of closed convex cones in $\RR^{d+1}$ we have
$$
\EE [f( O_{u(S_n)} S_n) C(n,d+1) \alpha(S_n)] = \EE [f(S_n^{-e})].
$$
\end{lemma}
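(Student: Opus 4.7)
The plan is to unfold both sides explicitly as integrals against $\nu_d^n$ and show they agree. The intuition driving the computation is that picking a cone uniformly from the tessellation and then a uniform direction inside it has, after size-biasing by the solid angle, the same joint law as first picking a uniform direction $v\in\SS^d$ and then taking the cell of the tessellation containing $v$. The rotation $O_{u(S_n)}$ that carries this direction to $-e$ can then be absorbed by the rotation-invariance of $\nu_d^n$, turning the resulting cone into $S_n^{-e}$.

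Concretely, I would start by unfolding the left-hand side. Conditional on $S_n$, the point $u(S_n)$ is uniform in $S_n\cap\SS^d$ with density $1/\alpha(S_n)$ with respect to $\sigma_d$, so
\[
\EE[f(O_{u(S_n)} S_n)\mid S_n=C] = \frac{1}{\alpha(C)}\int_{C\cap\SS^d} f(O_v C)\, \sigma_d(\dint v).
\]
Combined with definition \eqref{eq:schlaefli_def} of $S_n$, the factor $\alpha(S_n)$ cancels against this conditional density and the factor $C(n,d+1)$ cancels against the normalization in \eqref{eq:schlaefli_def}, leaving
\[
\EE[f(O_{u(S_n)} S_n)\, C(n,d+1)\,\alpha(S_n)] = \int_{\cH^n} \sum_{C\in\cones(H_1,\ldots,H_n)}\int_{C\cap\SS^d} f(O_v C)\,\sigma_d(\dint v)\,\nu_d^n(\dint(H_1,\ldots,H_n)).
\]
Since the cones in $\cones(H_1,\ldots,H_n)$ tile $\SS^d$ up to a $\sigma_d$-null set of boundaries, the inner sum/integral collapses into a single integral over $v\in\SS^d$ with integrand $f(O_v C_H(v))$, where $C_H(v)$ denotes the a.s.\ unique cell of the tessellation containing $v$.

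Next I would apply Fubini, and for each fixed $v$ perform the change of variables $H_i\mapsto O_v H_i$ in $\cH^n$. This is measure-preserving because $\nu_d$ is rotation-invariant, and the tessellation transforms equivariantly: $\cones(O_v H_1,\ldots,O_v H_n) = O_v\cones(H_1,\ldots,H_n)$, and $O_v C_H(v)$ is precisely the cell of the transformed tessellation that contains $O_v v = -e$. Hence the inner integral equals $\EE[f(S_n^{-e})]$, independently of $v$, and integrating over $\SS^d$ with $\sigma_d(\SS^d)=1$ gives the claim.

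The main point that needs care is this change of variables: one must verify that after rotating, the cell containing $-e$ is genuinely $O_v C_H(v)$ rather than merely equal in distribution to it. This is the equivariance of ``which cell of a tessellation contains a given point'' under a joint rotation of the point and the hyperplane arrangement, and the Borel measurability of $(v_1,v_2)\mapsto O_{v_1} v_2$ assumed in the setup is exactly what makes the Fubini exchange and the subsequent change of variables admissible.
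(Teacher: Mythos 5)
Your proposal is correct and follows essentially the same route as the paper: unfold the uniform choice of $u(S_n)$ to cancel $\alpha(S_n)$, unfold the uniform choice of $S_n$ from $\cones(H_1,\ldots,H_n)$ to cancel $C(n,d+1)$, collapse the sum over cells into an integral over $v\in\SS^d$ of the cell containing $v$, and finish by rotational invariance of $\nu_d^n$. The only cosmetic difference is that you implement the last step as an explicit change of variables $H_i\mapsto O_vH_i$, landing directly on the cell containing $-e$, whereas the paper fixes $v=-e$ and then absorbs the residual rotation $O_{-e}$ by distributional invariance of $S_n^{-e}$; your variant is, if anything, slightly cleaner.
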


\begin{remark}
In other words,  {the statement of Lemma \ref{lemma:size_biased}} means that the random cone $S_n^{-e}$ is a size-biased version of the random cone $O_{u(S_n)} S_n$, where the size is measured in terms of the solid angle.
A version of this lemma in which no reference point $u(S_n)$ is chosen and instead the equality is stated for rotationally invariant $f$ only, was obtained in~\cite[Lemma 5.2]{HugSchneiderConicalTessellations}. For completeness, we provide a full proof.
\end{remark}
\begin{remark}
By definition, the elements of the space $\mathcal K_{\text{con}}$  mentioned in Lemma~\ref{lemma:size_biased}  are closed convex (not necessarily polyhedral) cones in $\RR^{d+1}$ different from $\{0\}$. The distance between two such cones $C_1$ and $C_2$ is defined to be the Hausdorff distance between the spherically convex sets $C_1\cap \SS^d$ and $C_2\cap \SS^d$, where the sphere is endowed with the geodesic distance  {(alternatively, the angular Hausdorff distance as introduced in \cite{SchneiderConicSupport} can be used as well)}.
The space $\mathcal K_{\text{con}}$ can be identified with the space $\mathcal K_s$ of spherically convex sets  (to use the notation of~\cite[Section 6.5]{SW}) by identifying each closed convex cone $C\neq \{0\}$ with the spherically convex set $C\cap \SS^d$.
\end{remark}
\begin{proof}[Proof of Lemma~\ref{lemma:size_biased}]
Using first the fact that $u(S_n)$ is  {uniformly distributed} on $S_n\cap \SS^d$, we obtain
\begin{align*}
&\EE [f(O_{u(S_n)} S_n) C(n,d+1) \alpha(S_n)]\\
&=
\EE \left[ \frac 1 {\alpha(S_n)} \int_{S_n\cap \SS^d} f(O_v S_n)C(n,d+1) \alpha(S_n)\, \sigma_d(\dint v)\right]\\
&=
\int_{\SS^d}\EE \left[f(O_v S_n) C(n,d+1) \ind\{v\in S_n\} \right] \,\sigma_d(\dint v).
\end{align*}
Recalling that $S_n$ is chosen uniformly from the collection $\cones(H_1,\ldots,H_n)$ which  {almost surely} contains $C(n,d+1)$ elements, we can proceed as follows:
\begin{align*}
\lefteqn{\EE [f(O_{u(S_n)} S_n) C(n,d+1) \alpha(S_n)]}\\
&=
\int_{\SS^d} \Big(\int_{\cH^n} \sum_{C\in \cones(H_1,\ldots,H_n)} f(O_v C) \ind\{v\in S_n\} \,\nu_d^n(\dint(H_1,\ldots,H_n))\Big) \, \sigma_d(\dint v)\\
&=
\int_{\SS^d} \Big(\int_{\cH^n}  f(O_v C^{v}) \,\nu_d^n(\dint(H_1,\ldots,H_n))\Big)\, \sigma_d(\dint v),
\end{align*}
where $C^{v}$ denotes the almost surely unique cone from the collection $\cones(H_1,\ldots,H_n)$ which contains the point $v$. By rotational invariance of $H_1,\ldots,H_n$, the inner integral does not depend on the choice of $v\in \SS^d$. So, we may choose $v:= -e$, which leads to
\begin{align*}
\EE [f(O_{u(S_n)} S_n) C(n,d+1) \alpha(S_n)]
=
\int_{\cH^n}  f(O_{-e} C^{-e}) \,\nu_d^n(\dint(H_1,\ldots,H_n))
=
\EE [f(S_n^{-e})],
\end{align*}
since the cone $S_n^{-e}$ has the same law as $C^{-e}$ and is rotationally invariant.
\end{proof}

In the language of densities, Lemma~\ref{lemma:size_biased} can be restated as follows: the laws of the random cones $S_n^{-e}$ and $O_{u(S_n)}S_n$ are mutually absolutely continuous probability measures on the space $\mathcal K_{\text{con}}$ and
\begin{equation}\label{eq:radon_nik}
\frac{\dint \mu_{S_n^{-e}}}{\dint \mu_{O_{u(S_n)}S_n}} (C) = C(n,d+1) \alpha(C), \qquad C\in \mathcal K_{\text{con}}.
\end{equation}

\subsection{Profiles of cones}
To state results about weak convergence of random cones, we need to introduce the language of profiles (or cross-sections) of cones. As $n\to\infty$, the Cover-Efron random cone and the random cone $R_n$ become ``thick'' (i.e., close to a half-space), whereas the Schl\"afli random cone $S_n$ and the random cone $S_n^{-e}$ become ``thin'' (i.e., close to a ray). In order to have weak convergence, we need to rescale the cones, which is most conveniently done after replacing the cones by their profiles, defined as follows.

For a cone $C\subset \RR^{d+1}$ we denote by $u(C)$ a random point sampled uniformly from $C\cap \SS^d$. We denote by $\Tan_v$ the tangent space at $v\in\SS^d$ of $\SS^d$ and fix for every point $v\in\SS^d$ an isometry $I_v:\Tan_v\to\RR^d$ satisfying $I_v(v)=0$ and such that the map $(v_1,v_2)\mapsto I_{v_1} v_2$ (defined on the tangent bundle of $\SS^d$) is Borel-measurable. The \textbf{profile} of a (possibly random) cone $C\subset\RR^{d+1}$ can then be defined as $I_{u(C)}(C\cap\Tan_{u(C)})$. Excluding the cases when $C$ is empty, equal to $\RR^{d+1}$ or a half-space, the profile is a polytope in $\RR^d$.

We shall be interested in the following rescaled profiles:
\begin{equation}\label{eq:def_P_n_Q_n}
P_n:=n\,I_{-e}(S_n^{-e} \cap\Tan_{-e})\subset \RR^d
\qquad\text{and}\qquad
Q_n:=n\,I_{u(S_n)}(S_n\cap\Tan_{u(S_n)})\subset \RR^d.
\end{equation}
Both, $P_n$ and $Q_n$, are random convex closed subsets of $\RR^d$, but they need not be bounded. For example, the cone $S_n^{-e}$ need not be contained in the lower half-space. Fortunately, the probability of these events goes to $0$, as  the following lemma states.
\begin{lemma}\label{lem:compact_with_probab1}
We have
\begin{equation}\label{eq:limits_P_n_Q_n}
\lim_{n\to\infty} \PP[P_n\in \cK^d] = 1
\qquad
\text{ and }
\qquad
\lim_{n\to\infty} \PP[Q_n\in \cK^d] = 1.
\end{equation}
\end{lemma}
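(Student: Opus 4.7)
The plan is to deduce both conclusions from a single fact: for every fixed $\varepsilon>0$,
\begin{equation*}
\PP\Bigl[\max_{C\in\cones(H_1,\ldots,H_n)}\diam(C\cap\SS^d)\geq\varepsilon\Bigr]\longrightarrow 0\quad\text{as }n\to\infty,
\end{equation*}
where $\diam$ here denotes the spherical (intrinsic) diameter. Specialising to $\varepsilon=\pi/2$ and combining with the elementary geometric observation below will force both profiles to lie in $\cK^d$.

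The geometric observation is as follows. If $C$ is a closed convex cone in $\RR^{d+1}$ with $\diam(C\cap\SS^d)<\pi/2$ and $u\in C\cap\SS^d$, then every $v\in C\cap\SS^d$ satisfies $\langle u,v\rangle>0$ (since the spherical distance between $u$ and $v$ is below $\pi/2$). Hence each ray $\{tv:t\geq 0\}$ meets the tangent plane $\Tan_u=\{y\in\RR^{d+1}:\langle u,y\rangle=1\}$ at the point $v/\langle u,v\rangle$ of norm $1/\langle u,v\rangle$, and compactness of $C\cap\SS^d$ yields a uniform positive lower bound on $\langle u,v\rangle$; so $C\cap\Tan_u$ is compact. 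I would apply this with $(C,u)=(S_n^{-e},-e)$, which is valid since $-e\in S_n^{-e}$ by definition, and with $(C,u)=(S_n,u(S_n))$, valid since $u(S_n)\in S_n\cap\SS^d$ almost surely. The rescaling by the factor $n$ trivially preserves compactness, so $P_n,Q_n\in\cK^d$ on the event that every cell has diameter less than $\pi/2$.

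To establish the diameter bound I would use a standard covering/separation argument. Fix a finite $(\varepsilon/4)$-net $\{x_1,\ldots,x_N\}$ of $\SS^d$. For any pair $(x_i,x_j)$ with spherical distance at least $\varepsilon/2$, a short Lipschitz estimate shows that a single random hyperplane $H_k=U_k^\perp$ \emph{strictly} separates the geodesic balls $B(x_i,\varepsilon/4)$ and $B(x_j,\varepsilon/4)$ with probability at least some positive constant $p=p(\varepsilon,d)>0$, reducing the issue to the uniform measure of the event $\langle U_k,x_i\rangle>\sin(\varepsilon/4)$ and $\langle U_k,x_j\rangle<-\sin(\varepsilon/4)$. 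Independence of $U_1,\ldots,U_n$ then bounds the probability that \emph{no} hyperplane separates such a pair by $(1-p)^n$, and a union bound over the at most $N^2$ relevant pairs yields a total exceptional probability $O(N^2(1-p)^n)\to 0$. On the complement event, any $v_1,v_2\in\SS^d$ at spherical distance at least $\varepsilon$ have net approximants $x_i,x_j$ within $\varepsilon/4$, so at mutual distance at least $\varepsilon/2$; the hyperplane strictly separating the corresponding balls must also separate $v_1$ and $v_2$, so they cannot lie in a common cell.

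The only real obstacle is the quantitative lower bound $p(\varepsilon,d)>0$ on single-hyperplane separation of two balls at fixed minimum distance, which is a short spherical-geometry computation. I note that this direct route is considerably cleaner than trying to transfer $\PP[P_n\in\cK^d]\to 1$ to the $Q_n$-statement through the size-biasing identity of Lemma~\ref{lemma:size_biased}: the inverse Radon--Nikodym derivative $1/(C(n,d+1)\alpha(\cdot))$ is unbounded, and controlling it on the unboundedness event would appear to require essentially the same cell-diameter control anyway.
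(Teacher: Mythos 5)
Your proof is correct in substance, and it is worth comparing it with the paper's, which treats the two claims asymmetrically. For $Q_n$ the paper also reduces to the statement that the maximal spherical diameter of a cell of $\cones(H_1,\ldots,H_n)$ tends to $0$, but proves it qualitatively (the maximal diameter is non-increasing in $n$ along a common realization of $H_1,H_2,\ldots$ and each cell is eventually split, so the convergence is even almost sure); your net-and-union-bound argument gives the same conclusion with an explicit exponential rate and without needing to couple the tessellations for different $n$. For $P_n$ the paper takes a different route: by the duality $\mu_{R_n^\circ}=\mu_{S_n^{-e}}$ it suffices that $e\in\operatorname{int} R_n$ with probability tending to $1$, which is shown by covering $d+1$ fixed points of $\SS^d_e$ with small caps each containing some $X_i$. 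Your observation that $S_n^{-e}$ is itself a cell of the tessellation containing $-e$, so that the same diameter bound covers it, unifies the two cases and is arguably cleaner; your closing remark that one should not try to transfer the statement through Lemma~\ref{lemma:size_biased} (because $1/(C(n,d+1)\alpha(\cdot))$ is unbounded) is also apt, and is precisely why the paper too avoids that route here. The elementary geometric reduction (diameter $<\pi/2$ forces $\langle u,v\rangle$ bounded away from $0$ on the compact set $C\cap\SS^d$, hence a bounded profile) is correct.

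One small quantitative slip: with an $(\varepsilon/4)$-net, two points $v_1,v_2$ at distance $\geq\varepsilon$ may have approximants $x_i,x_j$ at distance exactly $\varepsilon/2$, in which case the closed balls $B(x_i,\varepsilon/4)$ and $B(x_j,\varepsilon/4)$ touch and cannot be \emph{strictly} separated by any hyperplane, so your constant $p(\varepsilon,d)$ degenerates to $0$ on the boundary of the pair set. This is fixed by taking, say, an $(\varepsilon/8)$-net and separating balls of radius $\varepsilon/8$ around net points at distance $\geq 3\varepsilon/4$; the rest of the argument is unaffected.
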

\begin{proof}
By~\eqref{eq:DualityRnAndSne}, to prove the first identity it suffices to show that the probability that $e$ is contained in the interior of $R_n$ converges to $1$, as $n\to\infty$. Pick $d+1$ points on $\SS_e^d$ such that $e$ is contained in the interior of their positive hull. If $\varepsilon>0$ is sufficiently small, the positive hull of the $\varepsilon$-perturbed points still contains $e$ in its interior. Now, the probability that each geodesic ball of radius $\varepsilon$ around the $d+1$ points contains at least one point from the collection $X_1,\ldots,X_n$ converges to $1$, as $n\to\infty$. On this event, the cone $R_n = \pos(X_1,\ldots,X_n)$ contains $e$ in its interior, hence the polar cone $S_n^{-e}$ is contained in the open lower half-space and $P_n\in \cK^d$. This proves the first claim in~\eqref{eq:limits_P_n_Q_n}.

To prove the second claim in~\eqref{eq:limits_P_n_Q_n},  we observe that the definition of $Q_n$ and the rotational invariance imply that
$$
Q_n = n\,I_{u(S_n)}(S_n\cap\Tan_{u(S_n)}) \eqdistr n\, I_{-e} (O_{u(S_n)} (S_n) \cap \Tan_{-e}),
$$
 {where we write $\overset{d}{=}$ for equality in distribution}.
Thus, our task reduces to showing that
$$
\lim_{n\to\infty} \PP[I_{-e} (O_{u(S_n)} (S_n) \cap \Tan_{-e})\in\cK^d]  = 1.
$$
To this end, it suffices to demonstrate that
$$
\lim_{n\to\infty} \PP[S_n \text{ contains a pair of unit vectors with angle $\geq \pi/2$}] = 0.
$$
To prove this, it suffices to show that the maximal diameter (in the sense of the geodesic distance on $\SS^d$) of a cell in the  {great hypersphere tessellation generated by the intersection of $\SS^d$ with $H_1,\ldots,H_n$} converges to $0$ in probability. However, this is true even almost surely, because the maximal diameter does not increase with $n$ and each cell will be eventually split into cells of arbitrarily small diameter with probability $1$ after adding sufficiently many new hyperplanes.
\end{proof}

Let $P_n^*$  be the random closed compact set whose distribution is the distribution of $P_n$  conditioned on the event $\{P_n\in \cK^d\}$. Similarly, let $Q_n^*$ be the random closed compact set distributed as $Q_n$ conditioned on the event $\{Q_n\in \cK^d\}$. We can view $P_n^*$ (respectively, $Q_n^*$) as the \textbf{rescaled profiles} of the random cone $S_n^{-e}=R_n^\circ$ (respectively, the Schl\"afli random cone $S_n$). In Section~\ref{sec:ConesAndProof}, we will prove the weak convergence of $P_n^*$ and $Q_n^*$ on the space $(\cK^d,\tau_H^d)$, as $n\to\infty$. Note that instead of conditioning on the events $\{P_n\in \cK^d\}$ and $\{Q_n\in \cK^d\}$ in the definition of $P_n^*$ and $Q_n^*$ it would alternatively be possible to re-define $P_n$ and $Q_n$ to be an arbitrary compact convex set on the events $\{P_n\notin \cK^d\}$, respectively $\{Q_n\notin \cK^d\}$. Since the probability of these events goes to $0$ as $n\to\infty$, this would not influence the weak convergence.

\section{Weak convergence of the profiles of random cones}\label{sec:ConesAndProof}

In this section we present the proof of Theorem \ref{thm:Intro} about the weak convergence of the profiles $Q_n$ of  Schl\"afli random cones $S_n$.

\subsection{Weak convergence for cones in the half-sphere}

We start by considering the random cones $R_n$ defined in~\eqref{eq:R_n_def}. These become ``thick'' in the large $n$ limit. The following result is already known from \cite{MarynychKabluchkoTemesvariThaele}, but we present here a new and short proof in order to demonstrate the versatility of our new method and since parts of the proof will be essential for what follows.

\begin{proposition}\label{prop:WeakConvRn}
Let $\Pi$ be a Poisson point process on $\RR^d\setminus\{0\}$ whose intensity measure has density $g_d(y)={2\over\omega_{d+1}}{1\over\|y\|^{d+1}}$  with respect to the Lebesgue measure on $\RR^d\setminus\{0\}$. Then,
$$
n^{-1}I_e(R_n\cap\Tan_{e})\to \mathrm{conv}(\Pi), \quad \text{ as } n\to\infty,
$$
weakly on $(\cK^d,\tau_H^d)$.
\end{proposition}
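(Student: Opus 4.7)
The plan is to verify the Scheff\'e-type hypothesis of Proposition~\ref{prop:WeakCovAbstract} directly. Identify $\Tan_e$ with $\RR^d$ via $I_e$ and denote by $\pi:\SS^d_e\to\RR^d$ the gnomonic projection $\pi(x):=I_e(x/\lan x,e\ran)$, so that
$$
n^{-1}I_e(R_n\cap \Tan_e) \;=\; \mathrm{conv}(Y_1,\ldots,Y_n) \qquad\text{with}\qquad Y_i:=n^{-1}\pi(X_i).
$$
A routine change of variables starting from the uniform density $2/\omega_{d+1}$ on $\SS^d_e$ produces the Lebesgue density
$$
h_n(y) \;=\; \frac{2n^d}{\omega_{d+1}}\,(1+n^2\|y\|^2)^{-(d+1)/2},\qquad y\in\RR^d,
$$
of an individual $Y_i$. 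The crucial property I would record for later use is
$$
n\,h_n(y) \;=\; \frac{2}{\omega_{d+1}}(n^{-2}+\|y\|^2)^{-(d+1)/2} \;\nearrow\; g_d(y)\quad\text{for every }y\neq 0,
$$
and, in particular, the pointwise bound $n\,h_n\leq g_d$ valid uniformly in $n$.

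Next I would write down the two densities on $\widetilde{\cP}_\infty^d$. Because $Y_1,\ldots,Y_n$ are in general position almost surely, $T_n:=\mathrm{conv}(Y_1,\ldots,Y_n)\in\cP_\infty^d$ a.s. A standard enumeration (choose which $m$ of the $n$ points are vertices and then assign them to the ordered slots $y_1,\ldots,y_m$) yields
$$
\varphi_n(y_1,\ldots,y_m) \;=\; \frac{n!}{(n-m)!}\prod_{j=1}^m h_n(y_j)\cdot p_n(y_1,\ldots,y_m)^{n-m},\qquad p_n:=\int_{\mathrm{conv}(y_1,\ldots,y_m)}\!\!h_n,
$$
on $\widetilde{\cP}_m^d$. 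On the Poisson side, $\int_{\|y\|\geq R}g_d<\infty$ and $\int_{\|y\|<R}g_d=\infty$ for every $R>0$ imply that $\Pi$ almost surely has finitely many points outside any neighborhood of the origin and accumulates at $0$, so $\mathrm{conv}(\Pi)\in\cP_\infty^d$ a.s.\ with $0$ in its interior; the Campbell--Mecke formula for Poisson processes then gives
$$
\varphi(y_1,\ldots,y_m) \;=\; \ind\{0\in\mathrm{int}\,\mathrm{conv}(y_1,\ldots,y_m)\}\prod_{j=1}^m g_d(y_j)\cdot \exp\!\Big(-\!\!\int_{\RR^d\setminus\mathrm{conv}(y_1,\ldots,y_m)}\!\!g_d\Big),
$$
the indicator being forced by $\int_{\RR^d\setminus\mathrm{conv}}g_d=\infty$ whenever $0\notin\mathrm{int}\,\mathrm{conv}$.

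The final step is the pointwise convergence $\varphi_n\to\varphi$ almost everywhere. The combinatorial factor decomposes as
$$
\frac{n!}{(n-m)!}\prod_{j=1}^m h_n(y_j) \;=\; \prod_{j=1}^m\bigl(n h_n(y_j)\bigr) \cdot \prod_{k=0}^{m-1}\!\Big(1-\tfrac{k}{n}\Big)\;\longrightarrow\; \prod_{j=1}^m g_d(y_j).
$$
For the factor $p_n^{n-m}$ I would distinguish two cases. When $0\in\mathrm{int}\,\mathrm{conv}(y_1,\ldots,y_m)$, the set $A:=\RR^d\setminus\mathrm{conv}(y_1,\ldots,y_m)$ is bounded away from the origin and $g_d$ is integrable over $A$; dominated convergence with dominator $g_d$ gives $n(1-p_n)=\int_A nh_n\to\int_A g_d$, hence $p_n^{n-m}\to\exp(-\int_A g_d)$. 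When $0\notin\mathrm{conv}(y_1,\ldots,y_m)$, a small ball around $0$ lies in $A$ and collects total $h_n$-mass tending to $1$, so $p_n\to 0$ and $p_n^{n-m}\to 0$. The boundary configuration $0\in\partial\,\mathrm{conv}$ is a Lebesgue null set and may be ignored. The uniform bound $nh_n\leq g_d$ is exactly what makes the dominated-convergence argument run, and this I expect to be the main technical point; the conclusion then follows by applying Proposition~\ref{prop:WeakCovAbstract}.
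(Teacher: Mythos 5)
Your proof is correct and follows essentially the same route as the paper's: identify the profile with the rescaled convex hull of i.i.d.\ Cauchy (beta') points via the gnomonic projection, compute the densities of the coordinate representations on $\widetilde \cP_\infty^d$, pass to the pointwise limit, and invoke Proposition~\ref{prop:WeakCovAbstract}. Your explicit case distinction on whether $0\in\mathrm{int}\,\mathrm{conv}(y_1,\ldots,y_m)$ and the domination $n h_n\le g_d$ merely spell out details the paper leaves implicit.
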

\begin{proof}
It is known~\cite{MarynychKabluchkoTemesvariThaele} that $I_e(R_n\cap\Tan_{e})$ is a beta' polytope  with parameter $\beta={d+1\over 2}$ in $\RR^d \equiv I_e(\Tan_{e})$, which is by definition the convex hull of random points $Z_1,\ldots,Z_n$  {in $\RR^d$} that are independent and have the $d$-dimensional Cauchy density
\begin{equation}\label{eq:beta_prime_density}
f_{d+1\over 2}(x) = {2\over\omega_{d+1}}{1\over(1+\|x\|^2)^{d+1\over 2}},\qquad x\in\RR^d.
\end{equation}
Here and in the following we use the standard notation $\kappa_d$ and $\omega_d$ for the volume of the unit ball in $\RR^d$ and the surface area of the unit sphere $\SS^{d-1}$, namely
$$
\kappa_d:={\pi^{d\over 2}\over\Gamma(1+{d\over 2})}
\qquad\text{and}\qquad
\omega_d := d\kappa_d = {2\pi^{d\over 2}\over\Gamma({d\over 2})}.
$$
In fact, the Cauchy distribution (which is a special case of the beta' distribution with parameter $\beta= \frac{d+1} 2$) appears as the image measure of the uniform distribution on $\SS_e^d$ under the gnomonic projection from $\SS_e^d$ to $\Tan_{e}$.
Beta' polytopes  were intensively studied in~\cite{kabluchko_poisson_zero,KabluchkoFormula,KabluchkoTemesvariThaele,KabluchkoThaele_VoronoiSphere,KabluchkoThaeleZaporozhets}, for example. Our aim is thus to show that the random polytopes
$$
T_n:= \frac 1n \mathrm{conv}(\{Z_1,\ldots,Z_n\}) \eqdistr I_e(R_n\cap\Tan_{e})
$$
converge to $\mathrm{conv} (\Pi)$ weakly on $(\cK^d,\tau_H^d)$. To this end, we use the method developed in Section~\ref{sec:weak_conv}.
For every $n\geq d+1$ the density $\varphi_n$ of $\iota^{-1}(T_n)$ with respect to $\mu_\infty^d$ is given by
\begin{align}\label{eq:PhiNdensity}
\varphi_n({\bf x}) = {n\choose m}\,m!\bigg(\prod_{i=1}^m n^df_{d+1\over 2}(nx_i)\bigg)\bigg(\int_{\mathrm{conv}(\{x_1,\ldots,x_m\})} n^d\,f_{d+1\over 2}(ny)\,\dint y\bigg)^{n-m},
\end{align}
where ${\bf x}=(x_1,\ldots,x_m)\in \widetilde \cP_m^d$ and $m\geq d+1$.  Here, the factor $n\choose m$ reflects the choice of the $m$ points that become the vertices, $m!$ takes into account the permutations of the $m$ vertices, and the last factor in the formula is the probability that the remaining $n-m$ points are inside the convex hull of the $m$ vertices. Taking into account~\eqref{eq:beta_prime_density}, we have that
\begin{align*}
\varphi_n({\bf x}) &= {n!\over (n-m)!}\,\Big({2\over\omega_{d+1}}\Big)^m\bigg(\prod_{i=1}^m{n^d\over(1+n^2\|x_i\|^2)^{d+1\over 2}}\bigg)\\
&\qquad\qquad\times\bigg(1-{2\over\omega_{d+1}}\int_{\RR^d\setminus \mathrm{conv}(\{x_1,\ldots,x_m\})}{n^d\over(1+n^2\|y\|^2)^{d+1\over 2}}\dint y\bigg)^{n-m}\\
&={n!\over (n-m)!}\,{1\over n^m}\Big({2\over\omega_{d+1}}\Big)^m\bigg(\prod_{i=1}^m{1\over({1\over n^2}+\|x_i\|^2)^{d+1\over 2}}\bigg)\\
&\qquad\qquad\times\bigg(1-{1\over n}{2\over\omega_{d+1}}\int_{\RR^d\setminus \mathrm{conv}(\{x_1,\ldots,x_m\})}{1\over({1\over n^2}+\|y\|^2)^{d+1\over 2}}\dint y\bigg)^{n-m}.
\end{align*}
Thus, for each ${\bf x}=(x_1,\ldots,x_m)\in \widetilde \cP_m^d$ we obtain
\begin{align*}
\lim_{n\to\infty}\varphi_n({\bf x}) = \Big({2\over\omega_{d+1}}\Big)^m\bigg(\prod_{i=1}^m{1\over \|x_i\|^{d+1}}\bigg)\exp\bigg(-{2\over\omega_{d+1}}\int_{\RR^d\setminus \mathrm{conv}(\{x_1,\ldots,x_m\})}{\dint y\over\|y\|^{d+1}}\bigg).
\end{align*}
Next, we compute the density $\varphi={\dint\mu_{T^\iota}\over\dint\mu_\infty^d}$, where $T$ is the convex hull of the Poisson point process $\Pi$ on $\RR^d\setminus\{0\}$ whose intensity measure has density $g_d(y)={2\over\omega_{d+1}}{1\over\|y\|^{d+1}}$ with respect to the Lebesgue measure. Recall that $\mu_\infty^d$ is a measure on $\widetilde \cP_\infty^d$ with the property that its restriction to $\widetilde\cP_m^d$, $m\geq d+1$, coincides with the Lebesgue measure. Since the intensity measure of $\Pi$ has density $g_d$, we have that, for all ${\bf x}=(x_1,\ldots,x_m)\in\widetilde\cP_m^d$,
\begin{align}\label{eq:PhiDef}
\nonumber\varphi({\bf x})
&=\bigg(\prod_{i=1}^mg_d(x_i)\bigg)\PP[\Pi(\RR^d\setminus\mathrm{conv}(\{x_1,\ldots,x_m\}))=0]\\
\nonumber&=\bigg(\prod_{i=1}^mg_d(x_i)\bigg)\exp\Big(-\int_{\RR^d\setminus\mathrm{conv}(\{x_1,\ldots,x_m\})}g_d(y)\,\dint y\Big)\\
&=\Big({2\over\omega_{d+1}}\Big)^m\bigg(\prod_{i=1}^m{1\over \|x_i\|^{d+1}}\bigg)\exp\bigg(-{2\over\omega_{d+1}}\int_{\RR^d\setminus\mathrm{conv}(\{x_1,\ldots,x_m\})}{\dint y\over\|y\|^{d+1}}\bigg).
\end{align}
This proves that $\varphi_n\to\varphi$ pointwise on $\widetilde \cP_\infty^d$ {, as $n\to\infty$}. Finally, we notice that $\PP[T_n\in \cP_\infty^d]=\PP[T\in \cP_\infty^d]=1$, because the probability that two vertices of $T_n$ or $T$ have the same first coordinate is $0$. We can thus apply Proposition \ref{prop:WeakCovAbstract} to conclude the result.
\end{proof}

By passing to the polar cone $R_n^\circ$, which has the same law as $S_n^{-e}$ by~\eqref{eq:DualityRnAndSne} and which becomes ``thin'' as $n\to\infty$, we arrive at the following result.

\begin{corollary}\label{cor:RnPolar}
Let $\Pi$ be a Poisson point process on $\RR^d\setminus\{0\}$ whose intensity measure has density $g_d(y)={2\over\omega_{d+1}}{1\over\|y\|^{d+1}}$ with respect to the Lebesgue measure. Then,
$$
P_n^*
\to \mathrm{conv}(\Pi)^\circ, \quad \text{ as } n\to\infty,
$$
weakly on $(\cK^d,\tau_H^d)$. Here, $\mathrm{conv}(\Pi)^\circ$ denotes the dual polytope of $\mathrm{conv}(\Pi)$.
\end{corollary}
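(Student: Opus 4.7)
The plan is to reduce the statement to Proposition \ref{prop:WeakConvRn} via the continuous mapping theorem, using the observation that conical polarity corresponds to polytope polarity under the identification of the tangent hyperplanes $\Tan_e$ and $\Tan_{-e}$ with the affine slices $\{x\in\RR^{d+1}:\langle x,e\rangle=+1\}$ and $\{x\in\RR^{d+1}:\langle x,e\rangle=-1\}$ of $\RR^{d+1}$.

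To make this precise, I would decompose $X_i=(U_i,V_i)\in\RR^d\times[0,1]$, set $Z_i:=U_i/V_i$, and recall from the proof of Proposition \ref{prop:WeakConvRn} that (up to a fixed orthogonal transformation, which is harmless by rotational invariance) $I_e(R_n\cap\Tan_e)=K_n:=\mathrm{conv}(Z_1,\ldots,Z_n)$. A direct calculation for $y=-e+(w,0)\in\Tan_{-e}$ shows that $\langle y,X_i\rangle\leq 0$ is equivalent to $\langle w,Z_i\rangle\leq 1$, so $I_{-e}(R_n^\circ\cap\Tan_{-e})$ can be identified with the polytope polar $K_n^\circ$. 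Combining this with \eqref{eq:DualityRnAndSne} gives the distributional identity
$$
P_n \eqdistr n\cdot K_n^\circ = \bigl(n^{-1}K_n\bigr)^\circ = T_n^\circ,\qquad T_n:=n^{-1}K_n,
$$
and Proposition \ref{prop:WeakConvRn} yields $T_n\to\mathrm{conv}(\Pi)$ weakly on $(\cK^d,\tau_H^d)$.

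It would then remain to pass the polar operation through the limit. The polar map $K\mapsto K^\circ$ is continuous on the open subset $\{K\in\cK^d:0\in\mathrm{int}(K)\}$ in the Hausdorff metric, a classical fact of convex geometry. I would then verify that $\mathrm{conv}(\Pi)$ belongs to this subset almost surely: the rotationally invariant intensity $g_d(y)\propto\|y\|^{-d-1}$ has infinite mass on every neighbourhood of the origin, hence $\Pi$ almost surely places points in every solid angle arbitrarily close to $0$, forcing $0$ into the interior of the convex hull. Since the conditioning event $\{P_n\in\cK^d\}$ corresponds under the above identification to $\{0\in\mathrm{int}(T_n)\}$ and has probability tending to $1$ by Lemma \ref{lem:compact_with_probab1}, the continuous mapping theorem applied to the conditional laws of $T_n$ yields $P_n^*\to\mathrm{conv}(\Pi)^\circ$ weakly on $(\cK^d,\tau_H^d)$. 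The main delicate point — really the only obstacle — is the possible discontinuity of the polar map at convex bodies whose interior misses the origin; Lemma \ref{lem:compact_with_probab1} together with the almost-sure interior containment for $\mathrm{conv}(\Pi)$ precisely ensure that this exceptional locus is asymptotically negligible.
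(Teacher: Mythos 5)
Your proposal is correct and follows essentially the same route as the paper: identify $P_n$ (on the event $\{P_n\in\cK^d\}$) with the polytope polar of $n^{-1}I_e(R_n\cap\Tan_e)$, invoke Proposition \ref{prop:WeakConvRn}, and apply the continuous mapping theorem using continuity of polarity on bodies containing the origin in their interior together with $\PP[0\in\mathrm{int}(T_n)]\to 1$ and $\PP[0\in\mathrm{int}(\mathrm{conv}(\Pi))]=1$. The only cosmetic difference is that you prove the almost-sure interior containment for $\mathrm{conv}(\Pi)$ directly from the infinite mass of the intensity near the origin, whereas the paper cites it from an earlier work; both are fine.
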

\begin{proof}
Recall that $P_n^*$ has the distribution of $P_n = n\,I_{-e}(S_n^{-e} \cap\Tan_{-e})$ conditioned on the event $\{P_n\in \cK^d\}$. On this event, whose probability converges to $1$, as $n\to\infty$, $P_n$ is the dual polytope of $n^{-1}I_e(R_n\cap\Tan_{e})$, and the claim follows directly from Proposition~\ref{prop:WeakConvRn}, the continuity of the polarity map on the set $\cK_o^d$ of convex compact sets containing the origin in their interior (see \cite[Theorem 13.3.4]{Moszynska}), the continuous mapping theorem and the fact that $\PP[n^{-1}I_{e}(R_n\cap\Tan_{e})\in\cK_o^d]\to 1$, as $n\to\infty$,  {and the fact} that $\PP[\mathrm{conv}(\Pi)\in\cK_o^d]=1$. For the latter claim, see~\cite[Corollary~4.2]{MarynychKabluchkoTemesvariThaele}.
\end{proof}

\begin{remark}\label{rem:density}
In the sequel, we shall need the following statement, which is slightly stronger than Corollary~\ref{cor:RnPolar}:
$$
\lim_{n\to\infty} \frac {\dint (\mu_{\iota^{-1}(P_n^*)})} {\dint \mu_\infty^d}
=
\frac{\dint (\mu_{\iota^{-1}(\mathrm{conv}(\Pi)^\circ)})}{\dint \mu_\infty^d},
\qquad
\text{ $\mu_\infty^d$-a.e.\ on $\widetilde \cP_\infty^d$.}
$$
This pointwise convergence of densities can be demonstrated as follows. In the proof of Proposition~\ref{prop:WeakConvRn} we have shown that the densities of $\iota^{-1}(T_n)$ with respect to the measure $\mu_\infty^d$ converge almost everywhere on $\widetilde \cP_\infty^d$ to the density of $\iota^{-1}(\mathrm{conv} (\Pi))$ with respect to the same measure:
\begin{equation}\label{eq:tech0}
\lim_{n\to\infty} \frac {\dint (\mu_{\iota^{-1}(T_n)})} {\dint \mu_\infty^d}
=
\frac{\dint (\mu_{\iota^{-1}(\mathrm{conv} (\Pi))})}{\dint \mu_\infty^d},
\qquad
\text{ $\mu_\infty^d$-a.e.\ on $\widetilde \cP_\infty^d$.}
\end{equation}
Now, $P_n^*$ has the same distribution as the convex dual $T_n^\circ$ of $T_n$ conditioned on the random event that $T_n$ contains the origin in its interior:
\begin{equation}\label{eq:tech1}
\frac {\dint (\mu_{\iota^{-1}(P_n^*)})} {\dint \mu_\infty^d}
=
\frac{\dint (\mu_{\iota^{-1}(T_n^\circ)|\{0\in {\rm int}\, T_n\}})}{\dint \mu_\infty^d}.
\end{equation}
We claim that the density of $\iota^{-1}(T_n^\circ)$ conditioned on $\{0\in {\rm int}\, T_n\}$ can be obtained from the density of $T_n$ as follows:\\
\begin{equation}\label{eq:transform1}
\frac{\dint (\mu_{\iota^{-1}(T_n^\circ)|\{0\in {\rm int}\, T_n\}})}{\dint \mu_\infty^d} (\iota^{-1}(p^\circ))
=
\frac{\dint (\mu_{\iota^{-1}(T_n)})}{\dint \mu_\infty^d}
(\iota^{-1}(p))\cdot J(\iota^{-1}(p))\cdot \frac {\ind_{\{0\in {\rm int}\, p\}}} {\PP[0\in {\rm int}\, T_n]},
\end{equation}
for some (non-explicit) function $J$ {, the Jacobian of the polarity map,} and $\mu_\infty^d$-a.e.\ $x = \iota^{-1}(p)\in \widetilde \cP_\infty^d$.  Indeed, let $p\in \tilde P_\infty^d$ be some simplicial polytope containing $0$ in its interior. We think of $p$ as of a potential realization of $T_n$. The vertices of the dual polytope $p^\circ$ correspond to the facets of $p$ and the coordinates of the vertices are infinitely differentiable functions of the coordinates of vertices of $p$ in a neighborhood of $p$. Thus, on the level of coordinate representations in the space $\widetilde \cP_{\infty}^d$, the polarity map is a differentiable, one-to-one map in a neighborhood of $\iota^{-1}(p)$. Of course, we have to exclude the exceptional sets on which two vertices of the dual polytope have coinciding first coordinates, but these closed sets have $\mu_{\infty}^d$-measure zero. Using the transformation formula for the polarity map, we obtain~\eqref{eq:transform1}. Similar arguments, applied to $\mathrm{conv}(\Pi)$ and its dual, yield the formula
\begin{equation}\label{eq:transform2}
\frac{\dint (\mu_{\iota^{-1}((\mathrm{conv} (\Pi))^\circ)})}{\dint \mu_\infty^d} (\iota^{-1}(p^\circ))
=
\frac{\dint (\mu_{\iota^{-1}(\mathrm{conv} (\Pi))})}{\dint \mu_\infty^d}
(\iota^{-1}(p))\cdot J(\iota^{-1}(p))\cdot \ind_{\{0\in {\rm int}\, p\}},
\end{equation}
where $J$, the Jacobian of the polarity map, is the same as in~\eqref{eq:transform1}. Taking~\eqref{eq:tech0}, \eqref{eq:tech1}, \eqref{eq:transform1}, \eqref{eq:transform2} together, we obtain the almost everywhere density convergence stated in the remark.
\end{remark}

\subsection{Weak convergence of  Schl\"afli random cones}\label{subsec:weak_conv_main_proof}

Before presenting the proof of Theorem \ref{thm:Intro}, we recall the definition of the typical cell $Z$  {of a stationary and isotropic Poisson hyperplane process in $\RR^d$}. We consider a Poisson process $\eta$ on the space of affine hyperplanes in $\RR^d$ whose intensity measure is given by
$$
\xi(\,\cdot\,) := 2 \gamma\int_{\SS^{d-1}}\int_0^\infty \ind\{u^\perp + tu \in\,\cdot\,\}\,\dint t\,\sigma_{d-1}(\dint u),
$$
where we identified a hyperplane in $\RR^d$ with a pair $(u,t)\in\SS^{d-1}\times[0,\infty)$ representing the direction of a unit normal vector and the distance to the origin.  {Here, $\gamma$ stands for the constant given by~\eqref{eq:gamma} and $\sigma_{d-1}$ for the normalized spherical Lebesgue measure on $\SS^{d-1}$.} The hyperplanes of $\eta$ partition the space into almost surely countably many $d$-dimensional random polytopes. The collection of these polytopes is denoted by $\widehat{\eta}$  {and referred to as the stationary and isotropic Poisson hyperplane tessellation in $\RR^d$ with intensity $\gamma$.} Conditionally on $\eta$, we choose for each polytope $p\in \widehat{\eta}$ its ``center'' by picking one point $v(p)$ uniformly at random inside $p$. We now define a distribution $\mu_Z$ on the space $\cP^d$ as follows:
$$
\mu_Z(\,\cdot\,) := \Big(\EE\sum_{p\in\widehat{\eta}} \ind \{v(p)\in[0,1]^d\}\Big)^{-1}\EE\sum_{p\in\widehat{\eta}}\ind\{v(p)\in[0,1]^d,p-v(p)\in\,\cdot\,\}.
$$
A random polytope $Z$ distributed according to $\mu_Z$ is called the \textbf{typical cell} of $\eta$ centered at a uniform random point.
Note that usually  {in stochastic geometry} one centers the cells in a different way by using instead of the random center $v(P)$ some deterministic center function $c(p)$, for example the center of the smallest ball that can be circumscribed around $p$; see \cite{SW}. Intuitively, one might think of $Z$ as a random polytope ``uniformly'' selected  from the set of all cells in $\widehat{\eta}$ and centered at a uniform random point inside itself. Two realizations of Poisson hyperplane tessellations are shown in Figure \ref{fig2}.

\begin{figure}[t]
\begin{center}
\includegraphics[width=0.8\columnwidth]{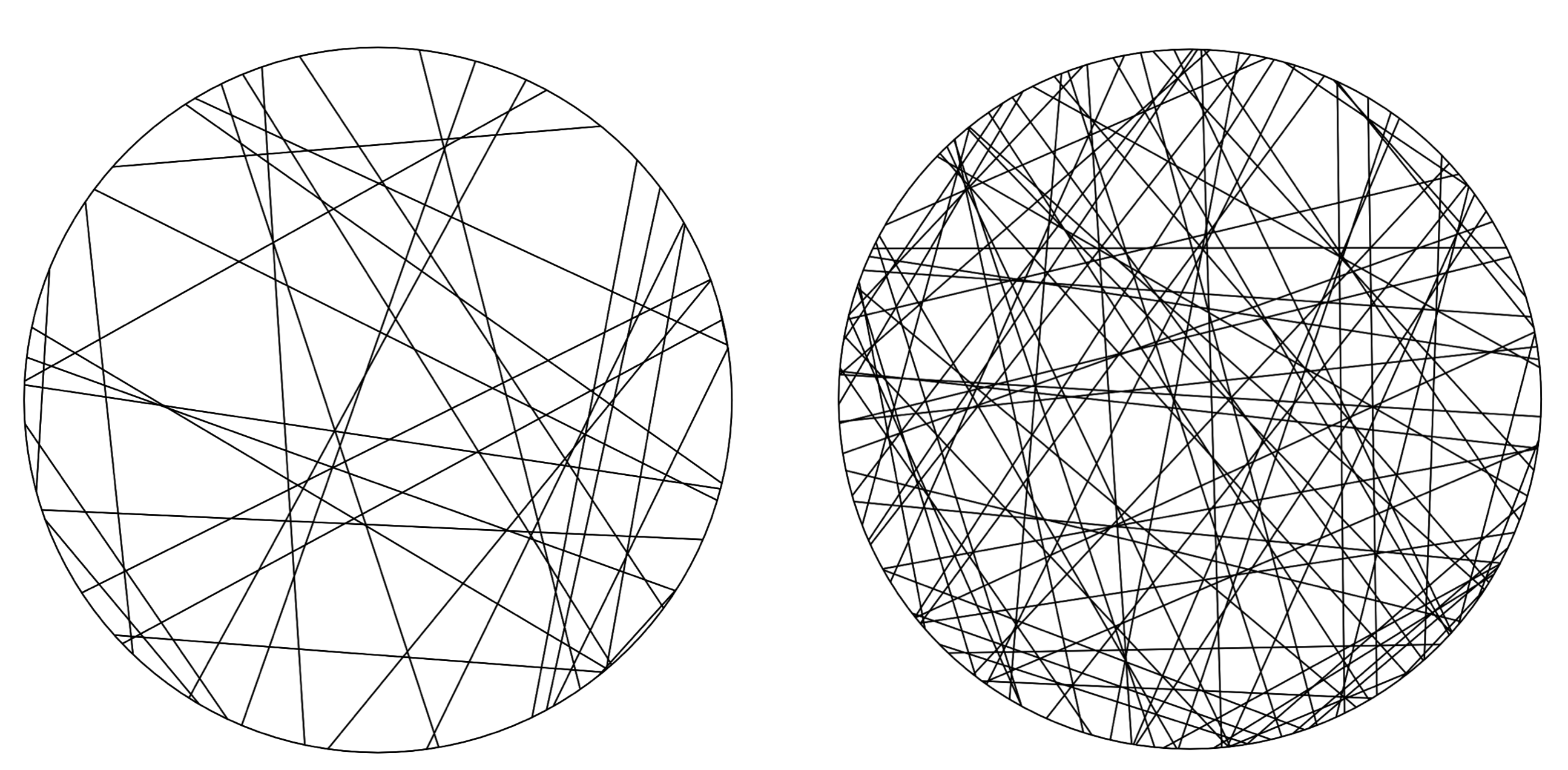}
\end{center}
\caption{Two realizations of stationary and isotropic Poisson hyperplane tessellations in $\RR^2$ with intensities $5$ (left panel) and $15$ (right panel) seen in a disc of radius $1$.}
\label{fig2}
\end{figure}

We are now prepared to restate and prove Theorem \ref{thm:Intro} about the weak convergence of the profiles $Q_n^*$ of the  Schl\"afli random cones $S_n$. Recall that $Q_n^*$ has the same distribution as $Q_n=n\,I_{u(S_n)}(S_n\cap\Tan_{u(S_n)})$ conditioned on the event $\{Q_n\in \cK^d\}$.
\begin{theorem}\label{thm:Intro_restate}
As $n\to\infty$, $Q_n^*$ converges to $Z$ weakly on  $(\cK^d,\tau_H^d)$.
\end{theorem}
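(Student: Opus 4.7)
The plan is to combine Lemma~\ref{lemma:size_biased} (in the density form~\eqref{eq:radon_nik}) with Corollary~\ref{cor:RnPolar} (in the strengthened form given in Remark~\ref{rem:density}) to obtain pointwise convergence of the densities of $\iota^{-1}(Q_n^*)$ on $\widetilde{\cP}_\infty^d$, and then to invoke the Scheff\'e-type Proposition~\ref{prop:WeakCovAbstract}. The rescaled profile map $C\mapsto n\,I_{-e}(C\cap \Tan_{-e})$ is a measurable bijection between cones strictly containing $-e$ in their interior and compact polytopes strictly containing $0$ in their interior, so~\eqref{eq:radon_nik} transfers to a Radon--Nikodym relation between the laws of $P_n$ and (a rotational copy of) $Q_n$. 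After conditioning on $\{P_n\in\cK^d\}$ and $\{Q_n\in\cK^d\}$, whose probabilities both tend to $1$ by Lemma~\ref{lem:compact_with_probab1}, and translating via $\iota$ to the coordinate space, this yields
\begin{equation*}
\frac{\dint \mu_{\iota^{-1}(Q_n^*)}}{\dint \mu_\infty^d}(x) \;=\; \frac{\PP[P_n\in\cK^d]}{\PP[Q_n\in\cK^d]}\cdot \frac{1}{C(n,d+1)\,\alpha_n(\iota(x))}\cdot \frac{\dint \mu_{\iota^{-1}(P_n^*)}}{\dint \mu_\infty^d}(x),
\end{equation*}
where $\alpha_n(p)$ is the solid angle of the unique cone whose rescaled profile at $-e$ equals the polytope $p$.

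The asymptotics of the normalizing factor follow from the same gnomonic projection that produced the beta' density~\eqref{eq:beta_prime_density}: rescaling by $n$ gives
$$
C(n,d+1)\,\alpha_n(p) \;=\; \frac{C(n,d+1)}{n^d\,\omega_{d+1}}\int_p \frac{\dint y}{(1+\|y\|^2/n^2)^{(d+1)/2}} \;\xrightarrow[n\to\infty]{}\; \frac{2}{d!\,\omega_{d+1}}\vol_d(p),
$$
by $C(n,d+1)\sim 2n^d/d!$ and dominated convergence on compact $p$. Inserting this asymptotic together with Remark~\ref{rem:density} into the displayed identity shows that $\varphi_n:=\dint \mu_{\iota^{-1}(Q_n^*)}/\dint \mu_\infty^d$ converges pointwise on $\widetilde{\cP}_\infty^d$ (restricted to $x$ with $0$ in the interior of $\iota(x)$) to
$$
\varphi(x) \;=\; \frac{d!\,\omega_{d+1}/2}{\vol_d(\iota(x))}\cdot \frac{\dint \mu_{\iota^{-1}(\mathrm{conv}(\Pi)^\circ)}}{\dint \mu_\infty^d}(x).
$$

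It then remains to identify $\varphi$ with the density of $\iota^{-1}(Z)$, which I would argue in two sub-steps. First, $\mathrm{conv}(\Pi)^\circ$ coincides in distribution with the zero cell $Z_0^*$ (the almost surely unique cell containing the origin) of the stationary and isotropic Poisson hyperplane tessellation with intensity $\gamma$: by duality $\mathrm{conv}(\Pi)^\circ$ is the intersection of the half-spaces $\{x:\langle x,y\rangle\le 1\}$, $y\in\Pi$, and the map $y=su\mapsto u^\perp + s^{-1}u$ pushes $\Pi$ forward to a Poisson hyperplane process whose intensity, after the polar substitutions $y=su$ and $r=1/s$, equals $\frac{2\omega_d}{\omega_{d+1}}\,\dint r\,\sigma_{d-1}(\dint u) = 2\gamma\,\dint r\,\sigma_{d-1}(\dint u)$ since $\omega_d/\omega_{d+1}=\gamma$ by~\eqref{eq:gamma}; this matches the intensity $\xi$ defining $\eta$. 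Second, a Palm/translation-invariance computation applied to the defining formula of $\mu_Z$ --- integrating over $v\in[0,1]^d$ and using that the cell of the tessellation translated by $-v$ that contains $0$ has the same law as $Z_0^*$ --- yields
$$
\mu_Z(A) \;=\; \frac{\EE\bigl[\vol_d(Z_0^*)^{-1}\ind\{Z_0^*\in A\}\bigr]}{\EE[\vol_d(Z_0^*)^{-1}]},
$$
so that $Z$ has density proportional to $\vol_d(\cdot)^{-1}$ with respect to the law of $Z_0^*$. Combining these sub-steps and noting that the normalizing constant must equal $2/(d!\,\omega_{d+1})$ because both $\varphi$ and the density of $Z$ are probability densities, one obtains $\varphi=\dint\mu_{\iota^{-1}(Z)}/\dint\mu_\infty^d$, and Proposition~\ref{prop:WeakCovAbstract} finishes the proof.

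The main obstacle is this last identification. The density-level manipulation leading to $\varphi$ is essentially formal once~\eqref{eq:radon_nik} and Remark~\ref{rem:density} are available. Recognising $\varphi$ as the density of $Z$ requires, on one hand, a careful polar-coordinate computation which forces precisely the value of $\gamma$ given in~\eqref{eq:gamma}, and on the other hand, a Palm-theoretic reformulation of the uniformly-centered typical cell $Z$ as a volume-debiased zero cell; both are standard but must be executed with care to verify that every normalizing constant agrees.
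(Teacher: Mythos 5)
Your proposal follows essentially the same route as the paper: the size-biasing identity \eqref{eq:radon_nik} transported through the rescaled profile map, the density convergence of Remark~\ref{rem:density}, the asymptotics $C(n,d+1)\sim 2n^d/d!$ and $n^d$ times the rescaled angle tending to ${2\over\omega_{d+1}}\vol$, and finally Proposition~\ref{prop:WeakCovAbstract}. Your displayed density identity agrees with the paper's (the factor $2/\PC_{(d+1)/2}(n^{-1}p)$ there is your $1/\alpha_n(p)$, and the ratio $\PP[P_n\in\cK^d]/\PP[Q_n\in\cK^d]\to 1$ is harmless). The two places where you argue directly instead of citing — the polar-coordinate pushforward identifying $\mathrm{conv}(\Pi)^\circ$ with the zero cell $Z_0$ of intensity $\gamma=\omega_d/\omega_{d+1}$, and the Palm computation giving $\mu_Z(A)=\EE[\vol(Z_0)^{-1}\ind\{Z_0\in A\}]/\EE[\vol(Z_0)^{-1}]$ — are both correct and are legitimate substitutes for the references to \cite{KabluchkoThaeleZaporozhets} and \cite[Theorem 10.4.1]{SW}.

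There is, however, one genuine gap: your final step, ``the normalizing constant must equal $2/(d!\,\omega_{d+1})$ because both $\varphi$ and the density of $Z$ are probability densities,'' is circular. You have shown $\varphi = c\cdot\varphi_Z$ with $c = \tfrac{d!\,\omega_{d+1}}{2}\,\EE[\vol(Z_0)^{-1}] = \tfrac{d!\,\omega_{d+1}}{2\,\EE[\vol(Z)]}$, but a pointwise limit of probability densities need only satisfy $\int\varphi\,\dint\mu_\infty^d\le 1$ by Fatou; nothing so far rules out $c<1$, i.e.\ mass escaping in the limit, in which case Proposition~\ref{prop:WeakCovAbstract} would not apply (Scheff\'e requires the limit to integrate to $1$). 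So $c=1$ must be \emph{verified}, not inferred. The fix is exactly the computation the paper carries out: take $\EE[\vol(Z)]=\kappa_d^{-1}(\omega_d/(\kappa_{d-1}\gamma))^d$ from \cite[p.~490]{SW}, insert $\gamma=\omega_d/\omega_{d+1}$, and check via $\kappa_d\omega_{d+1}=2^{d+1}\pi^d/d!$ that $\EE[\vol(Z)]=d!\,\omega_{d+1}/2$. With that one line supplied, your argument is complete.
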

\begin{proof}
Let $S_n$ be the  Schl\"afli random cone.
By rotational invariance, we have
$$
Q_n = n\,I_{u(S_n)}(S_n\cap\Tan_{u(S_n)}) \eqdistr n\, I_{-e} (O_{u(S_n)} (S_n) \cap \Tan_{-e}).
$$
Define the following set of cones:
$$
\cK_{\text{con}}^* = \{C\in \cK_{\text{con}}: \; -e\in C,\; I_{-e}(C\cap \Tan_{-e}) \in \cK^d\}.
$$
Define also the following  maps assigning to each cone in $\cK_{\text{con}}$ its rescaled profile:
$$
\Psi_n: \cK_{\text{con}}^* \to \cK^d,\qquad  C\mapsto  n\, I_{-e} (C\cap \Tan_{-e}).
$$
By Lemma~\ref{lem:compact_with_probab1}, the cone $O_{u(S_n)} (S_n)$ belongs to $\cK_{\text{con}}^*$ outside an event of  probability converging to $0$. The restrictions of the probability distributions of $O_{u(S_n)} (S_n)$ and $S_n^{-e}$ to $\cK_{\text{con}}^*$ are subprobability measures on $\cK_{\text{con}}^*$ defined as follows:
$$
\mu^*_{S_n^{-e}}(\cdot) := \mu_{S_n^{-e}}(\cdot \cap \cK_{\text{con}}^*),
\qquad
\mu^*_{O_{u(S_n)} (S_n)}(\cdot) := \mu_{O_{u(S_n)} (S_n)}(\cdot \cap \cK_{\text{con}}^*).
$$
By restricting~\eqref{eq:radon_nik} to $\cK_{\text{con}}^*$ we obtain that these subprobability measures are mutually absolutely continuous with density
\begin{equation}\label{eq:radon_nik_restr}
\frac{\dint \mu^*_{S_n^{-e}}}{\dint \mu^*_{O_{u(S_n)}S_n}} (C) = C(n,d+1) \alpha(C), \qquad C\in \mathcal \cK_{\text{con}}^*.
\end{equation}
Applying the injective transformation $\Psi_n^{-1}$ to the measures appearing in~\eqref{eq:radon_nik_restr}, we deduce that the image measures are mutually absolutely continuous with density
$$
\frac{\dint (\mu^*_{S_n^{-e}}\circ \Psi_n^{-1})}{\dint (\mu^*_{O_{u(S_n)}S_n}\circ \Psi_n^{-1})} (K) = \frac 12 \, C(n,d+1)\,\PC_{d+1\over 2}(n^{-1}K),\qquad K\in\cK^d,
$$
where $\PC_{\frac{d+1}{2}}$ denotes the beta' probability content  (with $\beta=\frac {d+1}{2}$) of a measurable set $B\subset\RR^d$ defined by
$$
\PC_{d+1\over 2}(B) := {2\over\omega_{d+1}}\int_{B}{\dint x\over(1+\|x\|^2)^{d+1\over 2}}.
$$
Here, we used the fact that the angle of a cone $C \in \cK_{\text{con}}^*$ equals $1/2$ times the beta' probability content of its profile $I_{-e} (C\cap \Tan_{-e})$ (the factor $1/2$ comes from the fact that the solid angle of a half-space is $1/2$). Inverting the density and applying the map $\iota$, we obtain
$$
\frac{\dint (\mu^*_{O_{u(S_n)}S_n}\circ \Psi_n^{-1}\circ \iota)} {\dint (\mu^*_{S_n^{-e}}\circ \Psi_n^{-1}\circ \iota)} (x) = \frac{2}{C(n,d+1)\,\PC_{d+1\over 2}(n^{-1} \iota (x))},\qquad x\in  \widetilde \cP_\infty^d.
$$
Passing to densities with respect to the ``Lebesgue measure'' $\mu_\infty^d$ on $\widetilde \cP_\infty^d$, we arrive at
$$
\frac{\dint (\mu^*_{O_{u(S_n)}S_n}\circ \Psi_n^{-1}\circ \iota)} {\dint \mu_\infty^d} (x)
=
\frac{2}{C(n,d+1)\,\PC_{d+1\over 2}(n^{-1} \iota (x))}\cdot \frac {\dint (\mu^*_{S_n^{-e}}\circ \Psi_n^{-1}\circ \iota)} {\dint \mu_\infty^d} ,\quad x\in  \widetilde \cP_\infty^d.
$$
Finally, recalling that the probability distribution of $Q_n^*$ is defined to be
$$
\mu_{Q_n^*}(\cdot)
=
\frac{\PP[ n\, I_{-e} (O_{u(S_n)} (S_n) \cap \Tan_{-e}) \in \cdot, O_{u(S_n)} (S_n) \in \cK_{\text{con}}^*]}{\PP[O_{u(S_n)} (S_n) \in \cK_{\text{con}}^*]}
=
\frac{\mu^*_{O_{u(S_n)}S_n} \circ \Psi_n^{-1}(\cdot)}{\PP[Q_n \in \cK^d]},
$$
we arrive at
$$
\frac{\dint (\mu_{Q_n^*}\circ \iota)} {\dint \mu_\infty^d} (x)
=
\frac{2}{C(n,d+1)\,\PC_{d+1\over 2}(n^{-1} \iota (x))\, \PP[Q_n \in \cK^d]}\cdot \frac {\dint (\mu^*_{S_n^{-e}}\circ \Psi_n^{-1}\circ \iota)} {\dint \mu_\infty^d} ,\quad x\in  \widetilde \cP_\infty^d.
$$
We would like to apply Proposition~\ref{prop:WeakCovAbstract} to the random polytopes $Q_n^*$. To this end, we need to show that the density on the right-hand side has an almost sure limit, as $n\to\infty$. First we recall from Remark~\ref{rem:density}  that
$$
\lim_{n\to\infty} \frac {\dint (\mu^*_{S_n^{-e}}\circ \Psi_n^{-1}\circ \iota)} {\dint \mu_\infty^d}
=
\frac{\dint (\mu_{\mathrm{conv}(\Pi)^\circ} \circ \iota)}{\dint \mu_\infty^d} \qquad \text{ $\mu_\infty^d$-a.e.\ on $\widetilde \cP_\infty^d$},
$$
where $\mathrm{conv} (\Pi)^\circ$ is the convex dual of $\mathrm{conv}(\Pi)$.
By Lemma~\ref{lem:compact_with_probab1}, we have $\lim_{n\to\infty}\PP[Q_n\in \cK^d] = 1$.
Then, note that by~\eqref{eq:C_n_d+1},
$$
\lim_{n\to\infty} {C(n,d+1)\over {2\over d!}\,n^d} = 1
$$
Moreover, using the substitution $u=n^{-1}v$, we see that for every polytope $p\in \cP^d$,
\begin{align*}
\PC_{d+1\over 2}(n^{-1}p) &= {2\over\omega_{d+1}}\int_{n^{-1}p}{\dint u\over(1+\|u\|^2)^{d+1\over 2}}\\
&= {2\over\omega_{d+1}}\,n^{-d}\,\int_p {\dint v\over\big(1+\|n^{-1}v\|^2\big)^{d+1\over 2}}
\end{align*}
and hence, by monotone convergence,
\begin{align*}
\lim_{n\to\infty} n^d\PC_{d+1\over 2}(n^{-1}\iota(x)) = {2\over\omega_{d+1}}\,\vol(\iota(x)),
\end{align*}
where $\vol(p)$ denotes the $d$-dimensional Lebesgue measure of $p$.
Taking everything together, we obtain
\begin{equation}\label{eq:lim_density}
\lim_{n\to\infty}
\frac{\dint (\mu_{Q_n^*}\circ \iota)} {\dint \mu_\infty^d} (x)
=
\frac{\dint \mu_{\iota^{-1}(\mathrm{conv}(\Pi)^\circ)}}{\dint \mu_\infty^d}(x)
\cdot
{d!\,\omega_{d+1}\over 2\,\vol(\iota(x))}
\qquad \text{ $\mu_\infty^d$-a.e.\ on $\widetilde \cP_\infty^d$.}
\end{equation}
By Proposition~\ref{prop:WeakCovAbstract}, the random polytope $Q_n^*$  converges weakly on  $(\cK^d,\tau_H^d)$ to a random polytope with density given by the right-hand side of~\eqref{eq:lim_density}.

It remains to identify the random polytope appearing in the limit.  From~\cite[Section~1.6]{KabluchkoThaeleZaporozhets} we know that $\mu_{\mathrm{conv}(\Pi)^\circ}=\mu_{Z_0}$, where $Z_0$ stands for the \textbf{zero cell} of a stationary and isotropic Poisson hyperplane tessellation in $\RR^d$ with intensity
$$
\gamma
=
{\omega_d\over 2}{2\over\omega_{d+1}}
=
{1\over\sqrt{\pi}}{\Gamma({d+1\over 2})\over\Gamma({d\over 2})}
$$
 {(we recall that $Z_0$ is the almost surely uniquely determined cell of $\widehat{\eta}$ containing the origin in its interior).}
This value for $\gamma$ comes from Remark~1.24 in~\cite{KabluchkoThaeleZaporozhets}, since our limiting Poisson point process $\Pi$ has intensity function ${2\over\omega_{d+1}}\|x\|^{-d-1}$ on $\RR^d\backslash\{0\}$. According to~\cite[p.\ 490]{SW}, the expected volume of $Z$ is given by
\begin{align*}
\EE[\vol(Z)] &= \Big({\omega_d\over\kappa_{d-1}}\Big)^d{1\over\kappa_d\,\gamma^d} = {1\over\kappa_d}\Big({\omega_d\over\kappa_{d-1}}\Big)^d\Big({\omega_{d+1}\over\omega_d}\Big)^d = {1\over\kappa_d}\Big({\omega_{d+1}\over\kappa_{d-1}}\Big)^d  =: c_d.
\end{align*}
Moreover, by \cite[Theorem 10.4.1]{SW} the laws of the zero cell $Z_0$ and the typicall cell $Z$ are mutually absolutely continuous with the density
$$
{\dint\mu_{Z_0}\over\dint\mu_Z}(p) = {\vol(p)\over\EE[\vol(Z)]}, \qquad p\in \cK^d.
$$
Equivalently,
$$
{\dint\mu_{Z}\over\dint\mu_{Z_0}}(p) = {\EE[\vol(Z)]\over\vol(p)} = {2\,c_d\over d!\,\omega_{d+1}}\,{d!\,\omega_{d+1}\over 2\,\vol(p)} = {d!\,\omega_{d+1}\over 2\,\vol(p)}, \qquad p\in \cK^d,
$$
because
$$
{2\,c_d\over d!\,\omega_{d+1}} = {2\over d!\,\kappa_d\omega_{d+1}}\Big({\omega_{d+1}\over\kappa_{d-1}}\Big)^d = (2\pi)^{-d}\Bigg({2\pi^{d+1\over 2}\over\Gamma({d+1\over 2})}{\Gamma({d+1\over 2})\over\pi^{d-1\over 2}}\Bigg)^d = 1,
$$
where we used that $\kappa_d\omega_{d+1}={2^{d+1}\pi^d\over d!}$. This means that
\begin{align*}
\lim_{n\to\infty}
\frac{\dint \mu_{\iota^{-1}(Q_n^*)}} {\dint \mu_\infty^d} (x)
&=
\frac{\dint \mu_{\iota^{-1}(Z_0)}}{\dint \mu_\infty^d}(x) \cdot {d!\,\omega_{d+1}\over 2\,\vol(\iota(x))}\\
&=
\frac{\dint \mu_{\iota^{-1}(Z_0)}}{\dint \mu_\infty^d}(x) \cdot  {\dint\mu_{\iota^{-1}(Z)}\over\dint\mu_{\iota^{-1}(Z_0)}}(x) \\
&=
\frac{\dint\mu_{\iota^{-1}(Z)}}{\dint \mu_\infty^d}(x),
\qquad \text{ $\mu_\infty^d$-a.e.\ on $\widetilde \cP_\infty^d$.}
\end{align*}
Applying Proposition~\ref{prop:WeakCovAbstract} with $T_n=Q_n^*$ and $T=Z$ we thus conclude that $Q_n^* \to Z$, as $n\to\infty$, weakly in $(\cK^d,\tau_H^d)$.
\end{proof}

\subsection*{Acknowledgement}
Z.K.\ has been supported by the German Research Foundation under Germany's Excellence Strategy  EXC 2044 -- 390685587, Mathematics M\"unster: Dynamics - Geometry - Structure.

\addcontentsline{toc}{section}{References}


\end{document}